\newcommand{\R}{\mathbb R}
\newcommand{\Ccal}{\mathcal C}
\newcommand{\Kcal}{\mathcal K}
\newcommand{\Pcal}{\mathcal P}
\newcommand{\bs}{\bar{s}}
\DeclareMathOperator{\arcsinh}{\mathrm{arcsinh}}
\DeclareMathOperator{\intt}{\mathrm{int}}
\newtheorem{theorem}{Theorem}[section]
\newtheorem{lemma}[theorem]{Lemma}
\newtheorem{proposition}[theorem]{Proposition}
\newtheorem{corollary}[theorem]{Corollary}
\theoremstyle{definition}
\newtheorem{example}[theorem]{Example}
\newtheorem{remark}[theorem]{Remark}
\numberwithin{equation}{section}
\numberwithin{figure}{section}
\title[Rotational Ricci surfaces]{Rotational Ricci surfaces}
\author[Iury Domingos, Roney Santos and Feliciano Vit\'orio]{Iury Domingos, Roney Santos and Feliciano Vit\'orio}
\address{KU Leuven\\
	Department of Mathematics\\
	Celestijnenlaan 200B -- Box 2400, 3001  Leuven, Belgium.}
\email{iury.domingos@im.ufal.br}
\address{Universidade Federal do Cear\'a\\
	Departamento de Matem\'atica\\		
	Campus do Pici, Av. Humberto Monte, 60455-760, Fortaleza - CE, Brazil.}
\email{roneysantos@mat.ufc.br}
\address{Universidade Federal de Alagoas\\
	Instituto de Matem\'{a}tica\\
	Campus A. C. Sim\~{o}es, BR 104 - Norte, Km 97, 57072-970, Macei\'o - AL, Brazil.}	
\email{feliciano@pos.mat.ufal.br}
\thanks{I. Domingos was supported by the Research Foundation-Flanders (FWO) and the Fonds de la Recherche Scientifique (FNRS) under EOS Project G0H4518N. R. Santos was supported by Instituto Serrapilheira, grant ``New perspectives of the min-max theory for the area functional''. F. Vit\'orio was partially supported by CNPq (405468/2021- 0)}
\keywords{Ricci surfaces, rotational surfaces, minimal surfaces}
\subjclass{Primary 53C42; Secondary 53A10}
\begin{document}
	
	\begin{abstract}
		We classify rotational surfaces in the three-dimensional Euclidean space whose Gaussian curvature $K$ satisfies
		\begin{equation*}
			K\Delta K - \|\nabla K\|^2-4K^3 = 0.
		\end{equation*}
		These surfaces are referred to as rotational Ricci surfaces. As an application, we show that there is a one-parameter family of such surfaces meeting the boundary of the unit Euclidean three-ball orthogonally. In addition, we show that this family interpolates a vertical geodesic and the critical catenoid.
	\end{abstract}
	
	\maketitle
	
	\section{Introduction}
	
	A classical problem in the theory of surfaces is to determine when a given Riemannian surface can be isometrically immersed as a minimal surface into a three-dimensional Riemannian manifold. In the Euclidean space $\R^3$, a result due to G. Ricci-Curbastro in \cite{Ricci-Curbastro} states that it is always possible provided that its Gaussian (intrinsic) curvature is strictly negative and satisfies a partial differential equation. He proved that a Riemannian surface $(\Sigma,\dif \sigma^2)$ with Gaussian curvature $K<0$ has local isometric embedding as minimal surface in $\R^3$ if and only if one of the three equivalent conditions below holds:
	\begin{itemize}
		\item[1.] the metric $\sqrt{-K}\dif \sigma^2$ is flat;
		\item[2.] the metric $(-K)\dif \sigma^2$ is round with Gaussian curvature $1$;
		\item[3.] the Gaussian curvature $K$ satisfies equation $4K = \Delta\log(-K).$
	\end{itemize}
	
	This result, however, does not include the case of zero Gaussian curvature at some point of $\Sigma.$ In \cite{MoroianuRicci}, A. Moroianu and S. Moroianu extended it by allowing this possibility. They proved that if $(\Sigma, \dif\sigma^2)$ has Gaussian curvature $K\leq 0,$ it can be isometrically locally embedded in $\R^3$ if and only if $K$ satisfies
	\begin{equation}\label{ricci-condition}
		K\Delta K - \|\nabla K\|^2-4K^3 = 0.
	\end{equation}
	
	Note that the equation above implies $4K = \Delta\log(-K)$ at the points where $K\neq 0$.
	
	A. Moroianu and S. Moroianu defined in \cite{MoroianuRicci}, inspired by the result of G. Ricci-Curbastro,  that a \emph{Ricci surface} is a Riemannian surface $(\Sigma, \dif\sigma^2)$ whose Gaussian curvature satisfies equation \eqref{ricci-condition}. In this case, $\dif \sigma^2$ is called \emph{Ricci metric} and equation \eqref{ricci-condition} is called \emph{Ricci condition}. They also proved that the Gaussian curvature of Ricci surfaces is either identically zero or has only isolated zeros and does not change sign. This property will play an important role in our work.
	
	\subsection*{Examples of Ricci surfaces}
	
	The simplest class of Ricci surfaces are the flat Riemannian surfaces.
	
	A first more interesting class of examples of Ricci surfaces are the minimal surfaces of $\R^3.$ Indeed, by the results discussed above, Ricci surfaces of $\R^3$ can be regarded as a generalisation of minimal surfaces of $\R^3$ in the following sense: every minimal surface in $\R^3$ is a Ricci surface, and every Ricci surface with non-positive Gaussian curvature admits a local isometric embedding as a minimal surface in $\R^3$.
	
	Another large class of examples of Ricci surfaces comes from biconservative surfaces in $\R^3$, as shown by D. Fetcu, S. Nistor and C. Oniciuc in \cite{MR3622313}. The authors established that Ricci surfaces arise from non-constant mean curvature biconservative surfaces of $\R^3$.
	
	Non-trivial examples of Ricci surfaces with genus at least three appear in the literature as a consequence of \cite[Theorem~1]{Traizet-2008}: for every lattice $\Gamma\subset\R^3$ and $g\geq 3$, there is a complete minimal surface $\Sigma$ in $\R^3$ invariant under the translation group defined by $\Gamma\subset\R^3$ such that $\Sigma/\Gamma$ has genus $g$ and is a closed Ricci surface, as proved by A. Moroianu and S. Moroianu in \cite{MoroianuRicci}. Additionally, in the same work, they constructed closed Ricci surfaces, possibly with conical singularities, for genus at least two whose Gaussian curvature vanishes precisely at one point.
	
	Later, in \cite{zang2022non}, Y. Zang provided examples of Ricci surfaces with catenoidal ends that are non-compact and have non-positive Gaussian curvature. More precisely, the author firstly proved a classification of Ricci surfaces with genus zero, non-positive Gaussian curvature and catenoidal ends. Secondly, he shown an existence result for Ricci surfaces that have an arbitrary number of genus and catenoidal ends.
	
	Our goal in this work is to contribute to the theory of Ricci surfaces in two ways. The first one is by providing a classification of this kind of surfaces that are rotational in $\R^3.$ From this classification, we present new examples of complete Ricci surfaces with negative Gaussian curvature that are not minimal in $\R^3$. The second one is the construction of a family of rotational Ricci surfaces intersecting the boundary of the unit Euclidean three-ball $B^3$ orthogonally.
	
	\subsection*{Organisation of the paper}
	
	We begin Section \ref{RicciRotational} by remembering definition and some properties of a rotational surface $\Sigma$ of $\R^3.$ For instance, we recall that $\Sigma$ can be entirely determined by a function $f$, and we deduce a fourth order system of ordinary differential equations that $f$ must satisfies for $\Sigma$ to be a Ricci surface. To finish this section, we will give some simple examples and properties of rotational Ricci surfaces.
	
	Thereafter, in Section \ref{reduction}, we will reduce the system previously found to a first order ordinary differential equation. Naturally, this ODE has three constant parameters, say $a,$ $b$ and $c.$ In Section \ref{dependence}, we will investigate qualitatively the dependence of $f$ of these constants. This will be important to see, in the general setting, existence and non-existence of rotational Ricci surfaces.
	
	The classification will be enunciated and proved in Section \ref{classificationresults} as described below.
	
	\begin{itemize}
		\item \textbf{Flat surfaces.} The only surfaces in this case are right circular cylinders, right circular cones and planes. Furthermore, this simplest situation is important to see that there are examples of complete, extendable and non-extendable rotational Ricci surfaces.
		
		\item \textbf{The case $a=0$.} Other than the case of flat surfaces, this is the only case with minimal surfaces, namely, catenoids.
		
		\item \textbf{The case $b=0$.} We obtain $f$ as an implicit function. For this reason, the result will be separated in two cases determined by the strictly monotonicity of $f.$ This family contains examples of complete surfaces with negative Gaussian curvature that have the shape of a funnel.
		
		\item \textbf{The case $a \neq 0$ and $b \neq  0$.} It will be necessary to reparametrise the profile curve of $\Sigma.$ The expressions of the reparametrisation and of the function $f$ will be given explicitly.
	\end{itemize}
	
	Once Ricci surfaces generalise minimal surfaces, we extend our interest in free boundary minimal surfaces in $B^3$ to Ricci surfaces inside $B^3$ meeting $\partial B^3$ orthogonally. In this way, in Section \ref{freeboundary}, as an application of our classification results, we introduce a one-parameter family of rotational Ricci surfaces with this property. The orthogonality condition imposes the strong conditions that these surfaces have negative Gaussian curvature and are such that $a=0.$ Furthermore, this family gives an interpolation between the vertical geodesic of $B^3$ and the critical catenoid.
	
	\section{Ricci condition for rotational surfaces}\label{sec:2}
	
	In this section we introduce the Ricci rotational surfaces in the Euclidean space $\R^3$. We denote by $(x,y,z)$ the canonical coordinates in $\R^3$ and, without loss of generality, we assume that the rotational axis is the $z$-axis and generating curves of our rotational surfaces are contained in the $xz$-plane.
	
	\subsection{Preliminaries and first examples}\label{RicciRotational}
	
	Let $\alpha: I \to \R^3$ be a regular curve given by
	\[\alpha(s) = (f(s), 0, g(s)),\]
	where $I\subseteq\R$ is an open interval and $f, g:I \to \R$ are smooth functions such that $f > 0.$ We assume that $\alpha$ is parametrised by its arc length, i.e.,
	\[f'(s)^2+g'(s)^2 = 1.\]
	The surface $\Sigma$ generated by the rotation of $\alpha$ around the $z$-axis, given by the parametrisation
	\[X(s, \theta)=\big(f(s)\cos \theta , f(s)\sin \theta , g(s)\big),\]
	is called a \textit{rotational surface} and $\alpha$ is its \textit{profile curve}.
	
	Since the function $g$ satisfies
	\[g(s) = g(s_0) \pm \int_{s_0}^s\sqrt{1 - f'(\zeta)^2}\dif \zeta\]
	where $s_0 \in I,$ then $\Sigma$ is determined only by $f,$ up to a vertical isometry of $\R^3$. For this reason, without loss of generality we suppose that $g(s_0)=0$, $g'\geq 0$ and write $\Sigma = \Sigma_f.$		
	
	In order to deduce the Ricci condition to these surfaces, we observe firstly that the induced metric on $\Sigma_f$ is
	\[\dif \sigma^2 = \dif s^2+f(s)^2\dif \theta^2.\]
	A direct computation shows that the Gaussian curvature $K$ of $\Sigma_f$ satisfies the ordinary differential equation
	\begin{equation}\label{eq-2}
		K(s) + \dfrac{f''(s)}{f(s)} = 0.
	\end{equation}
	Also, since the principal curvatures of $\Sigma_f$ are
	\begin{equation}\label{principal-curvatures}
		k_1(s) = -\frac{g'(s)}{f(s)} \ \text{ \ and \ } \ k_2(s)= g'(s)f''(s)-g''(s)f'(s),
	\end{equation}
	at the points where $|f'(s)|< 1$, the mean curvature of $\Sigma_f$ is given by
	\begin{equation}\label{curvatura-media}
		H(s) =\frac{f''(s)f(s)+f'(s)^2-1}{2f(s)\sqrt{1-f'(s)^2}}.
	\end{equation}
	Since $K$ does not depend of the rotation angle, we get that
	\begin{equation}\label{gradient-and-laplacian}
		\nabla K = K' X_s \ \ \text{and}\ \ \Delta K =  K''+\frac{f'}{f} K',
	\end{equation}
	and then the Ricci condition for rotational surfaces is given by
	\begin{equation}\label{eq-1}
		K K''-(K')^2-4K^3+\frac{f'}{f} K K' = 0.
	\end{equation}
	Hence, $\dif\sigma^2$ is a Ricci metric if it obeys the following system of ordinary differential equations
	\begin{equation}\label{*}\tag{$*$}
		\begin{cases}
			K K''-(K')^2-4K^3+\dfrac{f'}{f} K K' = 0,\\
			\hfill K + \dfrac{f''}{f} = 0.
		\end{cases}
	\end{equation}
	
	A rotational surface $\Sigma_f$ whose the induced metric satisfies the system $\eqref{*}$ is said a \textit{rotational Ricci surface}.
	
	We present below the first examples of rotational Ricci surfaces. As we will see, these are the only examples with vanishing Gaussian curvature in at least one point of the surface.
	
	\begin{example}[Complete case]
		For each $r > 0,$ the simplest examples of complete rotational Ricci surfaces are the right circular cylinder of radius $r,$ and the catenoid with neck of radius $r$, centered at the origin of $\R^3$. We will denote these surfaces by $\Ccal_0(r)$ and $\Ccal_1(r)$, respectively.
	\end{example}
	
	\begin{example}[Extended case]
		For each $z_0\in \R$, the plane $\{z=z_0\}$ is of course a Ricci surface, but it is not a rotational surface according to our definition, as in this case, $f$ would vanish at a point. However, it is an \textit{extended rotational surface} in the sense of \cite[Remark~4, p. 77]{manfredo}. Therefore, we consider these horizontal planes as rotational Ricci surfaces, and we will denote them by $\Pcal_{z_0}.$ 
	\end{example}
	
	\begin{example}[Non-extendable case]
		For each $\varphi\in(0,\pi)$ and $v\in \R$, the simplest examples of non-extendable rotational Ricci surfaces are the cones of opening angle $\varphi$ and vertex at the $(0,0,v).$ In this case, $f:I \to \R$ is given by $f(s) = ms+ r$ and $g(s)=\sqrt{1-m^2}(s-s_0)$ for some $s_0 \in I,$ with $0<m^2 <1$. Then $\varphi=2\arctan(\tfrac{m}{\sqrt{1-m^2}})$ and $v = -\tfrac{\sqrt{1 - m^2}}{m}(ms_0 + r).$ We will denote these cones by $\Kcal_v(\varphi).$	
	\end{example}
	
	By the result of A. Moroianu and S. Moroianu, we know that the Gaussian curvature of a rotational Ricci surface either vanishes identically or has only isolated zeros and does not change sign on $\Sigma_f$ \cite[Theorem~1.2]{MoroianuRicci}. We can use this to prove the following properties about the curvature of $\Sigma_f.$
	
	\begin{proposition}\label{vetor-horizontal}
		Let $\Sigma_f$ be a rotational Ricci surface, where $f:I \to \R$ is a positive smooth function. Let $s_*\in \bar{I}$ be such that $f$ is well-defined and positive at $s_*.$
		\begin{itemize}
			\item[1.] If $K(s_*) = 0,$ then $K$ vanishes identically.
			\item[2.] If $\alpha'(s_*)$ is a horizontal vector, then $\Sigma_f$ is part of a horizontal plane.
		\end{itemize}
	\end{proposition}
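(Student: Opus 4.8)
The plan is to exploit the fact that, for a rotational surface, the Gaussian curvature $K$ depends only on the arc-length parameter $s$, so that its zero set on $\Sigma_f$ is invariant under the rotation about the $z$-axis. For item~1, I would first treat the case in which $s_*$ lies in the interior of $I$: if $K(s_*)=0$, then $K$ vanishes along the whole parallel circle $\theta\mapsto X(s_*,\theta)$, which is a non-degenerate curve on $\Sigma_f$ because its radius $f(s_*)$ is positive. Hence the zeros of $K$ are not isolated. Invoking the dichotomy of A.~Moroianu and S.~Moroianu \cite[Theorem~1.2]{MoroianuRicci}, by which the Gaussian curvature of a Ricci surface either vanishes identically or has only isolated zeros, the existence of a one-dimensional set of zeros rules out the second alternative and forces $K\equiv 0$.

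The step I expect to be the main obstacle is the case in which $s_*$ lies on the boundary of $I$, for then the parallel circle over $s_*$ is a limiting circle that need not belong to $\Sigma_f$, and the dichotomy cannot be applied directly. To handle it I would use the hypothesis that $f$ is well-defined and positive at $s_*$: this keeps us away from the singular locus $f=0$ of the system \eqref{*}, so the smooth solution $f$ extends across $s_*$ to a solution on an open interval $I'$ having $s_*$ as an interior point. Running the previous argument on $\Sigma_f$ over $I'$ then gives $K\equiv 0$ on $I'$, hence on $I$. Making this extension precise --- in particular checking that $f,f',f'',f'''$ admit finite limits at $s_*$, so that the initial value problem for \eqref{*} can be solved past the endpoint --- is where the actual work lies. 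A self-contained alternative is to analyse \eqref{eq-1} directly: on the set $\{K<0\}$ it takes the form $w''+\tfrac{f'}{f}w'=4K$ with $w=\log(-K)$, whose right-hand side tends to $0$ as $s\to s_*$ while the left-hand side is unbounded at any zero of $K$ of finite order, a contradiction unless $K\equiv 0$.

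For item~2, I would reduce to item~1. The vector $\alpha'(s_*)=(f'(s_*),0,g'(s_*))$ is horizontal exactly when $g'(s_*)=0$, and then the principal curvature $k_1(s_*)=-g'(s_*)/f(s_*)$ given by \eqref{principal-curvatures} vanishes, so $K(s_*)=k_1(s_*)k_2(s_*)=0$. Item~1 now yields $K\equiv 0$, whence \eqref{eq-2} forces $f''\equiv 0$ and $f(s)=ms+r$ with $f'\equiv m$ constant. Evaluating the arc-length relation $f'^2+g'^2=1$ at $s_*$ gives $m^2=f'(s_*)^2=1$, so that $g'^2\equiv 1-m^2=0$. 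Therefore $g$ is constant and $\Sigma_f$ is contained in a horizontal plane, as claimed.
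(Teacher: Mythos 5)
For item~1 your argument is the paper's own: the zeros of $K$ fill the parallel circle through $s_*$, hence are non-isolated, and the dichotomy of \cite[Theorem~1.2]{MoroianuRicci} forces $K\equiv 0$. Your worry about $s_*\in\bar I\setminus I$ is legitimate, and it is a refinement the paper skips (its proof treats the circle over $s_*$ as a subset of $\Sigma_f$, which is literally correct only when $s_*\in I$). Two corrections to your proposed fix, though. First, the initial value problem for \eqref{*} cannot be solved past the endpoint by standard ODE theory: written as a fourth-order equation for $f$, the coefficient of the top derivative is a multiple of $K$, so \eqref{*} is singular precisely where $K=0$, i.e.\ exactly under your hypothesis. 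The extension should instead be produced from an integrated form of the equation --- \eqref{prooflemma}, whose leading coefficient is $f>0$, or the first-order equation \eqref{ode-ricci} --- both of which are regular wherever $f>0$. Second, the extended $f$ need not satisfy $|f'|\le 1$ beyond $s_*$, so the extension need not exist as a surface in $\R^3$; one must apply the dichotomy to the abstract rotationally symmetric metric $\dif s^2+f^2\dif\theta^2$, which is legitimate because the Ricci condition is intrinsic (this is the remark following Lemma~\ref{lemma}). Your ``self-contained alternative'' is not a proof as sketched, since it only excludes zeros of finite order.

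The genuine gap is in item~2, and it sits exactly at the boundary points you scrutinised in item~1. The step $K(s_*)=k_1(s_*)k_2(s_*)=0$ --- which is also the paper's step --- is unjustified when $|f'(s_*)|=1$: by \eqref{principal-curvatures}, $k_2=g'f''-g''f'=f''/\sqrt{1-f'^2}$, which is undefined at such a point, so the product is a $0\cdot\infty$ form; the reliable formula is the intrinsic one, $K(s_*)=-f''(s_*)/f(s_*)$ from \eqref{eq-2}, and this need not vanish. Indeed, for $s_*\in\bar I\setminus I$ the implication ``$\alpha'(s_*)$ horizontal $\Rightarrow K(s_*)=0$'' genuinely fails: take $a=c=0$, $b=-1$, $d=1$, so that $f(s)=\sqrt{1-s^2}$ on its maximal interval $I=\left(-\tfrac{1}{\sqrt 2},\tfrac{1}{\sqrt 2}\right)$ (one of the surfaces in case~2 of Theorem~\ref{a=0}); then $K(s)=(1-s^2)^{-2}>0$ everywhere, yet $f\left(\tfrac{1}{\sqrt 2}\right)=\tfrac{1}{\sqrt 2}>0$ and $\alpha'\left(\tfrac{1}{\sqrt 2}\right)=(-1,0,0)$ is horizontal, while $\Sigma_f$ is nowhere flat. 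So your reduction of item~2 to item~1 (like the paper's) is only valid for interior $s_*$, where the correct justification is different: since $|f'|\le 1$ on $I$ with equality at the interior point $s_*$, the function $f'$ attains a local extremum there, hence $f''(s_*)=0$ and $K(s_*)=0$ by \eqref{eq-2}; after that, your conclusion ($f$ affine with $m^2=1$, hence $g\equiv 0$ and $\Sigma_f$ planar) goes through verbatim.
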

	
	\begin{proof}
		\noindent 1. We have that $K$ vanishes along the curve $\{X(s_*, \theta) : \theta \in \R\}$ since $\Sigma_f$ is rotational, and it is a subset of non-isolated points of $\Sigma_f.$ Therefore, since $\Sigma_f$ is a Ricci surface, $K$ vanishes on $\Sigma_f.$\\
		
		\noindent 2. Since $f'(s_*)=\pm 1$ and $g'(s_*)=0,$ then by \eqref{principal-curvatures} we get that $K(s_*)=0$. It follows that $K$ is identically zero on $\Sigma_f$. That is, $f''=0$ and therefore $f(s) = ms + r$, with $m^2=1$, and $g(s)=0$, up to a vertical isometry of $\R^3$.
	\end{proof}
	
	\subsection{Reduction to a first order ODE}\label{reduction}
	
	In this subsection, we reduce the system \eqref{*} to a first order ordinary differential equation on $f$. Due to the nature of this system, we obtain a differential equation depending on three parameters $(a,b,c)$. This is the content of the next result.
	
	\begin{lemma}\label{lemma}
		
		Let $\Sigma_f$ be a rotational Ricci surface in $\R^3,$ where $f: I \to \R$ is a positive smooth function. Then, there exist constants $a,b,c\in \R$ such that
		\begin{equation}\label{ode-ricci}
			f(s)f'(s) = af(s)+bs+c.
		\end{equation}
		
		Conversely, let $f: I\to\R$ be a positive smooth function defined in an interval $I\subseteq \R$ such that
		$|f'(s)|\leq 1$ and \eqref{ode-ricci} holds for some $a,b,c\in\R$. Then there is a unique rotational Ricci surface $\Sigma_f$, up to a vertical isometry of $\R^3$, whose profile curve $\alpha:I\to\R^3$ is given by
		\begin{equation}\label{profile-curve}
			\alpha(s)=\left(f(s),0,\int_{s_0}^s\sqrt{\frac{f(\zeta)^2 - \left(af(\zeta) + b\zeta + c\right)^2}{f(\zeta)^2}}\dif \zeta\right),
		\end{equation}
		for some $s_0 \in I.$
	\end{lemma}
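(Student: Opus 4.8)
The plan is to handle both implications with one computation that rewrites the Ricci condition \eqref{eq-1} as an exact derivative. Throughout I use $K=-f''/f$, which is \eqref{eq-2} and holds for \emph{every} rotational surface, together with the dichotomy from Proposition \ref{vetor-horizontal}(1): on a rotational Ricci surface, either $K\equiv 0$ or $K$ never vanishes on $I$. The key observation, verified once by expanding the right-hand side with the quotient rule and substituting $f''=-Kf$, is the identity
\begin{equation*}
	KK''-(K')^2-4K^3+\frac{f'}{f}KK' = \frac{K^2}{f}\left(4f'+\frac{fK'}{K}\right)',
\end{equation*}
valid at every point where $K\neq 0$.

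For the direct statement, suppose $\Sigma_f$ is a rotational Ricci surface. If $K\equiv 0$, then $f''=0$, so $f(s)=ms+r$ and $ff'=m^2 s+mr$, which is \eqref{ode-ricci} with $(a,b,c)=(0,m^2,mr)$. If instead $K\neq 0$ on all of $I$, the left-hand side of the identity vanishes by \eqref{eq-1}, so $4f'+fK'/K$ is a constant, which I call $a$. Setting $p:=ff'$ and differentiating twice with $f''=-Kf$ gives $p''=-4Kff'-K'f^2=-Kf\bigl(4f'+fK'/K\bigr)=a(-Kf)=af''$; hence $(p-af)''=0$, so $p-af=bs+c$ for suitable $b,c\in\R$. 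This is exactly \eqref{ode-ricci}.

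For the converse, let $f>0$ be smooth with $|f'|\leq 1$ and $ff'=af+bs+c$. Then $(af+bs+c)^2=f^2(f')^2$, so the integrand in \eqref{profile-curve} equals $\sqrt{1-(f')^2}$, which is real by hypothesis; thus $\alpha$ in \eqref{profile-curve} is a well-defined unit-speed regular curve with first coordinate $f>0$, and it generates a rotational surface $\Sigma_f$. Equation \eqref{eq-2} holds automatically, so it remains to verify \eqref{eq-1}. Differentiating $ff'=af+bs+c$ twice and using $K=-f''/f$ reverses the previous computation to give $4f'+fK'/K=a$ wherever $K\neq 0$; by the identity, \eqref{eq-1} holds on $\{K\neq 0\}$, and it holds trivially where $K$ vanishes on an open set, hence on all of $I$ by continuity. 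Thus $\Sigma_f$ is a rotational Ricci surface. Its uniqueness up to a vertical isometry of $\R^3$ is clear: the third coordinate in \eqref{profile-curve} is determined by $f$ only up to the additive constant fixed by $s_0$ and the choice of sign of the square root, i.e.\ up to a vertical translation and reflection.

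The only real obstacle is locating the identity above: once one recognises that the full Ricci condition \eqref{eq-1} is $K^2/f$ times the derivative of the simple quantity $4f'+fK'/K$, both directions reduce to a single differentiation of $ff'$ and one integration. I would therefore spend the effort confirming that identity carefully, since every other step is routine.
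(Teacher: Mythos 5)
Your proof is correct, and its core coincides with the paper's own argument: the identity you isolate is exactly what the authors obtain by multiplying \eqref{eq-1} by $f/K^{2}$ and using \eqref{eq-2}, namely that the Ricci condition reads $\bigl(fK'/K+4f'\bigr)'=0$ away from zeros of $K$. In the forward direction the difference is only bookkeeping: after the first integration $fK'/K+4f'=a$, the paper passes to the logarithmic form $(\log(|K|f^{4}))'=a/f$, derives the third-order ODE $ff'''+3f'f''-af''=0$, and recognizes two exact derivatives, whereas you set $p=ff'$ and show directly that $p''=af''$; these are equivalent double integrations. (Your appeal to Proposition \ref{vetor-horizontal}(1), which gives that $K$ never vanishes on $I$ unless $K\equiv 0$, is in fact tidier than the paper's use of the weaker isolated-zeros dichotomy: it makes $\{K\neq 0\}$ all of the interval $I$, so the integration constants are automatically global, a point the paper leaves implicit when it restricts to an open set $U$.) Where you genuinely diverge is the converse: the paper computes $K$, $K'$ and $K''$ explicitly as rational expressions in $f$ and $bs+c$ from \eqref{ode-ricci} and verifies \eqref{eq-1} by a ``direct long computation'', while you reverse the integrations to recover $4f'+fK'/K=a$ on $\{K\neq 0\}$, apply the identity there, and then extend \eqref{eq-1} to all of $I$ by continuity, using that $\{K\neq 0\}\cup\intt\{K=0\}$ is dense. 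Your route avoids the long computation; the price is precisely that density step, which is genuinely needed because in the converse you may not yet assume the zeros of $K$ are isolated, and you handle it correctly (the complement of that union is the boundary of $\{K=0\}$, which has empty interior, and the left-hand side of \eqref{eq-1} is continuous since $f>0$). Both proofs treat well-definedness of \eqref{profile-curve} and uniqueness up to a vertical isometry in the same way, so nothing is missing.
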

	
	\begin{proof}
		Since $\Sigma_f$ is a rotational Ricci surface, then either $K$ vanishes identically, or it has only isolated zeros and does not change sign. If $K$ vanishes identically on $\Sigma_f$, then $f$ must be an affine function and thus equation \eqref{ode-ricci} holds for some constants $a,b,c\in \R$.
		
		Suppose that $K$ does not vanish identically on $\Sigma_f$. Since $K$ has isolated zeros on $\Sigma_f$, we restrict ourselves on an open set $U\subseteq\Sigma$ with $K \neq 0$.
		Multiplying equation \eqref{eq-1} by $f/K^2$, on $U$ we get
		\[\left(f \frac{K'}{K}\right)' -4 fK =0.\]
		Using equation \eqref{eq-2} in the term $-4 fK$, a first integration implies that
		\[f \frac{K'}{K} + 4 f'-a=0 \ \text{for some} \ a\in \R.\]
		Once $K$ does not change of sign on $\Sigma_f$, dividing the equation above by $f$, we get on $U$ that
		\[\left(\log |K|\right)' + 4 (\log f)'-\frac{a}{f}=0,\]
		that is,
		\[\left(\log (|K| f^4)\right)'-\frac{a}{f}=0.\]
		Therefore, by equation \eqref{eq-2}, we get
		\begin{equation}\label{prooflemma}
			ff'''+3f'f''-af''=0
		\end{equation}
		on $U$. As $ff'''+3f'f'' = (ff'')'+\big((f')^2\big)'$, a second integration implies that
		\[ff''+(f')^2-af'-b=0 \ \text{for some} \ b\in \R,\]
		and since $ff''+(f')^2 = (ff')'$, a third integration implies the first part of our assertion.		
		
		We now prove the second part of the Lemma. Let $f: I\to\R$ be a positive smooth function, such that
		$|f'(s)|\leq 1$, satisfying equation \eqref{ode-ricci}, for some $a,b,c\in\R$. Consider $\Sigma_f$ the rotational surface whose profile curve $\alpha$ is given by \eqref{profile-curve}. Note that $\alpha$ is well-defined since $|f'(s)|\leq 1$. By equation \eqref{ode-ricci}, we have that
		\begin{equation*}
			K(s) = \frac{(bs+c)^2+af(s)(bs+c)-bf(s)^2}{f(s)^4},
		\end{equation*}
		and a direct long computation with \eqref{ode-ricci} and the expression above implies that
		\begin{align*}
			K'(s) &=\frac{3af(s)+4(bs+c)}{f(s)^2}K(s),\\
			K''(s) &=\frac{24(bs+c)^2+35a(bs+c)f(s)+4(3a^2-b)f(s)^2}{f(s)^4}K(s)
		\end{align*}		
		By equation \eqref{eq-1}, we conclude that $K$ satisfies the Ricci condition. Therefore, $\Sigma_f$ is a rotational Ricci surface. Moreover, $\Sigma_f$ is uniquely determined by $f$, up to a vertical isometry of $\R^3$.
	\end{proof}
	
	\begin{remark}
		The statement of Lemma \ref{lemma} is intrinsic in the following sense. Let $(\Sigma,\dif \sigma^2)$ be a rotationally symmetric Riemannian surface. In terms of geodesic polar coordinates $(s,\theta)$, we can write its metric locally as
		\[\dif\sigma^2 = \dif s^2 + f(s)^2\dif\theta^2,\]
		for some positive smooth function $f:I \to \R$. Then $\dif\sigma^2$ is a Ricci metric if and only if the function $f$ satisfies equation \eqref{ode-ricci}.
	\end{remark}
	
	\begin{remark}\label{remark-derivadas}
		We also observe that the condition $|f'(s)| \leq 1$ is equivalent to
		\begin{equation}\label{condition-f'}
			\left(af(s)+bs+c\right)^2 \leq f(s)^2.
		\end{equation}
		By Proposition \ref{vetor-horizontal}, the equality holds if and only if $\Sigma_f$ is part of a horizontal plane. This condition also implies that the maximal set of definition of the profile curve $\alpha$ of $\Sigma_f$ is $\{s\in \R : f(s)  \ \text{is well-defined}\}\cap\{s\in \R : \eqref{condition-f'} \ \text{holds}\}$.
	\end{remark}
	
	\begin{remark}
		By Lemma \ref{lemma}, the Gaussian curvature of $\Sigma_f$ is given by
		\begin{equation}\label{eq-K_f}
			K(s) = \frac{(bs+c)^2+af(s)(bs+c)-bf(s)^2}{f(s)^4}.
		\end{equation}
		At the points where  $|f'(s)|<1$, we compute the mean curvature of $\Sigma_f$ using equation \eqref{curvatura-media} as
		\begin{align}
			\label{eq-H_f} H(s) &= \frac{\left(b-1\right) f(s)+a (af(s)+b s+c)}{2 f(s) \sqrt{f(s)^2-(a f(s)+b s+c)^2}}.
		\end{align}
		Moreover, the tangent vector of the profile curve of $\Sigma_f$ is horizontal when $f'(s)\to\pm 1$. In these points, the mean curvature of $\Sigma_f$ goes to infinity.	
	\end{remark}	
	
	\subsection{The $(a,b,c)$-dependence of $\Sigma_f$}\label{dependence}
	
	In order to guarantee the existence and uniqueness of $f$ for a such choice of parameters $(a,b,c)$, in this subsection we study the range of possibility for them excluding some sets of $\R^3$.
	
	Let $\mathcal{E}=\cup \mathcal{E}_i$ where $\mathcal{E}_1$, $\mathcal{E}_2$ and $\mathcal{E}_3$ are the following disjoint subsets of $\R^3$:
	\begin{align*}
		\mathcal{E}_1 &=\{(x_1,0,0)\in\R^3 : |x_1|\geq 1\},\\
		\mathcal{E}_2 &=\{(x_1,0,x_3)\in\R^3 : x_1\geq  1 \text{ and } x_3>0\}, \\
		\mathcal{E}_3 &=\{(x_1,0,x_3)\in\R^3 : x_1\leq -1 \text{ and } x_3<0\}.
	\end{align*}
	For a given $(a,b,c)\in \R^3$, we consider the open set $\Omega\subset\R\times(0,+\infty)$ defined by
	\[\Omega = \{(s,x)\in\R\times(0,+\infty) : (ax+bs+c)^2 < x^2\}.\]
	A straightforward verification infers that $\Omega \neq \emptyset$ if and only if $(a,b,c)\notin \mathcal{E}$.
	
	We remark some geometric aspects of the $\Omega$ set. Firstly, we suppose that $a^2\neq 1$. We consider $\ell_1$ and $\ell_2$ straight lines on the $sx$-plane given by
	\[\begin{array}{cl}
		\ell_1: & (a-1)x+bs+c=0,\\
		\ell_2: & (a+1)x+bs+c=0.\\
	\end{array}\]
	When $b\neq 0$, they are intersecting lines with intersection point at $s$-axis. If $\ell_1$ and $\ell_2$ have slopes with different signs (i.e., $a^2<1$), then the $s$-coordinate on $\Omega$ is allowed to assume any real value. Otherwise, if $\ell_1$ and $\ell_2$ have slopes with same sign (i.e., $a^2>1$), there is a natural barrier $\omega_s=-\frac{c}{b}$ for the $s$-coordinate on $\Omega$. When $b=0$, we have that $\ell_1$, $\ell_2$ and $s$-axis are parallels, then the $s$-coordinate on $\Omega$ is allowed to assume any real value, and the $x$-coordinate on $\Omega$ is ever away from zero when $c\neq 0$. In particular, if $c=0$ then $a^2<1$ and $\Omega = \R\times(0,+\infty)$.
	
	Suppose that $a^2=1$. When $b=0$, then the $s$-coordinate on $\Omega$ is allowed to assume any real value; 
	moreover, since $c\neq 0$ in this case, there is a natural barrier $\omega_x=-\frac{c}{2a}$ for the $x$-coordinate on $\Omega$ and then $\Omega = \R\times(\omega_x,+\infty)$. When $b\neq 0$, we have the straight lines $\ell_1$ and $\ell_2$ on the $sx$-plane
	\[\begin{array}{cl}
		\ell_1: & bs+c=0,\\
		\ell_2: & \pm 2x+bs+c=0.\\
	\end{array}\]
	They are intersecting lines with intersection point at $s$-axis, and there is a natural barrier $\omega_s=-\frac{c}{b}$ for the $s$-coordinate on $\Omega$, and $\Omega$ is the region on the $sx$-plane between the straight lines $\ell_1$ and $\ell_2$ such that $x>0$.
	
	Therefore, given $(a,b,c)\in\R^3\setminus \mathcal{E}$ and $(s_0,x_0)\in\Omega$, we define the smooth function $F:\Omega \to \R$ given by
	\[F(s,x) = \frac{ax + bs + c}{x}.\]
	We consider the following initial value problem
	\begin{equation*}
		\begin{cases}\label{PVI}\tag{$**$}
			x'(s) = F(s,x(s)),\\
			x(s_0) = x_0.
		\end{cases}
	\end{equation*}
	By Picard-Lindel\"of theorem, there exists a unique maximal smooth solution $f: I\to\R$ of \eqref{PVI} such that $f$ is a positive function, $|f'(s)|<1$ and equation \eqref{ode-ricci} holds. Moreover, since $f$ uniquely determined by \eqref{PVI} then $\Sigma_f$ is uniquely determined by $(a,b,c)$ and $(s_0,x_0)$, up to a vertical isometry of $\R^3$.
	
	\begin{remark} Since $\dif\sigma^2$ is complete if and only if $I = \R,$ we observe that if $a^2\geq 1$ and $b\neq 0$ then the Ricci metric $\dif\sigma^2$ cannot be complete, once there are natural barriers $\omega_s$'s on $\Omega$ in these cases. As we shall subsequently see,  in our classification results does not appear complete rotational Ricci surfaces of positive Gaussian curvature, and even in the negative Gaussian curvature case, only some examples are complete.
	\end{remark}
	
	\section{Classification of rotational Ricci surfaces}\label{classificationresults}
	
	In this section, we classify the rotational Ricci surfaces in $\R^3$ studying the solutions of equation \eqref{ode-ricci} and giving some geometric properties of them.  For the sake of completeness, let us treat the simple case when the Gaussian curvature $K$ of $\Sigma_f$ vanishes identically.
	
	\begin{proposition}\label{simplest}
		Let $\Sigma_f$ be a rotational surface in $\R^3,$ where $f:I \to \R$ is a positive smooth function. Suppose that the Gaussian curvature of $\Sigma_f$ vanishes identically. Then $f$ is an affine function and $\Sigma_f$ is part of either a right circular cylinder $\mathcal{C}_0(r)$, or a cone $\Kcal_v(\varphi)$ or an extended plane $\Pcal_0$.	
	\end{proposition}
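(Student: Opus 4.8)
The plan is to read off the form of $f$ directly from the curvature equation \eqref{eq-2}, and then to split into cases according to the (constant) slope of $f$, matching each case to one of the three model surfaces already exhibited in the examples.

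First I would observe that the hypothesis $K\equiv 0$, combined with \eqref{eq-2} and the positivity of $f$, forces $f''(s)=0$ on all of $I$. Hence $f$ is affine, say $f(s)=ms+r$ for constants $m,r\in\R$ (with $r$ chosen so that $f>0$ on $I$). Since $\alpha$ is parametrised by arc length we have $f'(s)^2+g'(s)^2=1$, so that $f'(s)^2=m^2\leq 1$. The trichotomy $m=0$, $0<m^2<1$, $m^2=1$ is what organises the remainder of the argument.

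Next I would treat the three cases in turn. If $m=0$, then $f\equiv r$ is constant and $g'(s)^2=1$, so $g$ is affine and $X(s,\theta)=(r\cos\theta,r\sin\theta,g(s))$ parametrises part of the right circular cylinder $\mathcal{C}_0(r)$. If $0<m^2<1$, then using the normalisation $g'\geq 0$ we get $g'(s)=\sqrt{1-m^2}$, so the profile curve is precisely the data of the cone $\Kcal_v(\varphi)$ of the non-extendable example, with opening angle $\varphi$ and vertex height $v$ read off from $m$, $r$ and $s_0$ exactly as recorded there. Finally, if $m^2=1$ then $f'(s)=\pm 1$, so $\alpha'(s)$ is a horizontal vector and Proposition \ref{vetor-horizontal}(2) applies directly, giving that $\Sigma_f$ is part of a horizontal plane $\Pcal_0$ (up to a vertical isometry of $\R^3$).

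I do not expect a serious obstacle here: once \eqref{eq-2} collapses to $f''=0$, the content is essentially bookkeeping against the three examples. The only point that deserves a little care is the boundary case $m^2=1$, where the profile curve degenerates to a horizontal segment and the surface is a plane only in the extended sense; there I would invoke Proposition \ref{vetor-horizontal} rather than argue from the parametrisation of $\alpha$, precisely because $f$ then reaches the degenerate regime $|f'|=1$ and the formulas \eqref{principal-curvatures}–\eqref{curvatura-media} lose their direct meaning.
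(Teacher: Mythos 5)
Your proposal is correct and follows essentially the same route as the paper: equation \eqref{eq-2} forces $f''=0$, hence $f(s)=ms+r$ with $m^2\leq 1$, and the trichotomy $m=0$, $0<m^2<1$, $m^2=1$ yields the cylinder, the cone and the extended plane, respectively. The only cosmetic difference is the case $m^2=1$, where the paper directly notes that $g'=\sqrt{1-m^2}=0$ forces $g\equiv 0$ (so $\Sigma_f$ lies in $\{z=0\}$), whereas you invoke Proposition \ref{vetor-horizontal}(2); this is legitimate (a flat surface trivially satisfies the Ricci condition, and that proposition is proved independently of this one), though its proof carries out exactly the same computation.
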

	
	\begin{proof}
		Since $K$ vanishes on $\Sigma_f$ then $f''=0$ and therefore $f(s) = ms + r$, with $m^2\leq 1$, and $g(s)=\sqrt{1-m^2}(s-s_0).$ If $m=0$, then $r>0$ and $\Sigma_f$ is part of a right circular cylinder $\mathcal{C}_0(r)$. If $m^2=1$, then $g$ vanishes identically on $I$ and $\Sigma_f$ is part of an extended plane $\Pcal_0$. If $0<m^2<1$, then  $\Sigma_f$ is part of a cone $\Kcal_v(\varphi)$ with $\tan\tfrac{\varphi}{2}=\tfrac{m}{\sqrt{1-m^2}}$ and $v = -\tfrac{\sqrt{1 - m^2}}{m}(ms_0 + r),$ for some $s_0\in I.$
	\end{proof}
	
	From now on, we suppose that $\Sigma_f$ is a rotational Ricci surface in $\R^3$ whose Gaussian curvature does not vanish identically. In order to classify these surfaces, we use Lemma \ref{lemma} to distinguish the three cases.
	
	\subsection{Case \texorpdfstring{$a = 0$}{a=0}}
	
	The rotational Ricci surfaces $\Sigma_f$ for which $a = 0$ in Lemma \ref{lemma} are characterised by  $f(s)f'(s) = bs + c$. By integration,
	\[f(s)^2 = bs^2 + 2cs + d\]
	for some $d\in\R$ such that $bs^2 + 2cs + d > 0$ for each $s\in I$. Note that we do not lost generality if we assume that $b \neq 0$ because this case was treated in the Proposition \ref{simplest}. By equation \eqref{eq-K_f}, the Gaussian curvature of $\Sigma_f$ is
	\begin{equation*}
		K(s) = \frac{c^2 - bd}{\left(bs^2 + 2cs + d\right)^2}.
	\end{equation*}
	Note that $K$ depends only on the sign of the constant term $c^2 - bd.$ We classify these surfaces in terms of the strict sign of $K$.
	
	\begin{theorem}[Case $a = 0$]\label{a=0}
		Let $\Sigma_f$ be a rotational Ricci surface in $\R^3,$ where $f:I \to \R$ is a positive smooth function defined by
		\[f(s) = \sqrt{bs^2 + 2cs + d}\]
		for some $b, c,d \in \R$ with $b \neq 0.$ Suppose that the Gaussian curvature $K$ of $\Sigma_f$ does not vanish and set
		\[s_1 = -\frac{1}{b}\left(c + \sqrt{\frac{bd - c^2}{b - 1}}\right)\ \ \mbox{and}\ \ \ s_2 = -\frac{1}{b}\left(c - \sqrt{\frac{bd - c^2}{b - 1}}\right)\]
		when $(bd - c^2)(b - 1) > 0.$ Then, one of the following holds.
		\begin{itemize}
			\item[1.] If $K<0,$ then $b, d \in (0, +\infty)$, $c^2<bd$ and			
			\begin{itemize}
				\item[$-$] either $I\subseteq\R$ for $0<b\leq 1$;
				
				\item[$-$] or $I\subseteq(s_1, s_2)$ for $b > 1.$
			\end{itemize}
			
			\item[2.] If $K>0,$ then $b \in (-\infty, 1)$, $bd<c^2$ and
			\begin{itemize}
				\item[$-$] either $I \subseteq (s_1, s_2)$ for $b < 0$;
				
				\item[$-$] or $I \subseteq (\tfrac{c^2-d}{2c},+\infty)$ if $c>0,$ or $I \subseteq (-\infty,\tfrac{c^2-d}{2c})$ if $c<0$, for $b = 0$;
				
				\item[$-$] or $I \subseteq (-\infty, s_1)$, or $I \subseteq (s_2, +\infty)$ for $0 < b < 1.$
			\end{itemize}
		\end{itemize}
	\end{theorem}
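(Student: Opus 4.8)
The plan is to reduce the entire classification to the sign of a single quadratic polynomial. Since $a=0$, equation \eqref{eq-K_f} gives $K(s)=(c^2-bd)/f(s)^4$, so the strict sign of $K$ equals that of $c^2-bd$; thus $K<0$ is the regime $c^2<bd$ and $K>0$ the regime $bd<c^2$. Two constraints cut out the domain $I$: that $f$ be well defined and positive, i.e.\ $f(s)^2=bs^2+2cs+d>0$, and that $|f'(s)|<1$, which by Remark \ref{remark-derivadas} (with $a=0$) reads $p(s):=f(s)^2-(bs+c)^2>0$. By Proposition \ref{vetor-horizontal}, the boundary case $p(s)=0$ forces a planar piece, which is excluded here, so for a genuine rotational Ricci surface $I$ must lie in a connected component of $\{p>0\}$.

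The key observation is the identity $f(s)^2=p(s)+(bs+c)^2$, which shows $p(s)>0\Rightarrow f(s)^2>0$. Hence positivity of $f^2$ is not an independent condition, and the admissible domain is governed solely by the sign of the single quadratic $p$. I would then record two facts about $p$: its leading coefficient is $b(1-b)$, and since $bs+c=0$ at $s=-c/b$ the same identity gives $p(-c/b)=f(-c/b)^2=(bd-c^2)/b=:V$, so $p$ attains the value $V$ at its vertex $s=-c/b$. Applying the quadratic formula to $p$ identifies its roots with the stated $s_1<s_2$ and shows they are real precisely when $(bd-c^2)(b-1)>0$.

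With these preliminaries the classification is a sign analysis of $p$. For $K<0$ (so $bd-c^2>0$): if $b<0$ then $V<0$ and $b(1-b)<0$, so $p$ is a downward parabola with negative maximum and $\{p>0\}=\emptyset$; this rules out $b<0$, forcing $b>0$ and then $d>0$ from $bd>c^2\ge 0$. For $0<b\le 1$ the vertex value $V>0$ is the minimum (or, when $b=1$, $p\equiv d-c^2>0$), so $p>0$ on all of $\R$ and $I\subseteq\R$; for $b>1$, $p$ opens downward with positive maximum $V$, so $\{p>0\}=(s_1,s_2)$ and $I\subseteq(s_1,s_2)$. For $K>0$ (so $bd-c^2<0$): if $b\ge 1$ then $p$ is a downward parabola (or the negative constant $d-c^2$ when $b=1$) with $V<0$, hence $\{p>0\}=\emptyset$, which forces $b<1$. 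Splitting $b<1$: when $b<0$ one has $V>0$ with $p$ opening downward, so $\{p>0\}=(s_1,s_2)$; when $b=0$, $p(s)=2cs+d-c^2$ is affine and positive on the stated half-line with endpoint $(c^2-d)/(2c)$; and when $0<b<1$, $p$ opens upward with negative minimum $V$, so $\{p>0\}=(-\infty,s_1)\cup(s_2,+\infty)$, giving the two alternatives for $I$.

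The main obstacle is conceptual rather than computational: recognising that positivity of $f^2$ need not be checked separately against positivity of $p$, because $f^2=p+(bs+c)^2$ collapses both requirements into the single inequality $p>0$. Without this identity one would be forced to compare the roots of the two distinct quadratics $f^2$ and $p$ in every case, and it is exactly this collapse that makes the admissible domains come out as the clean sets $(s_1,s_2)$, $(-\infty,s_1)\cup(s_2,+\infty)$, and the half-lines recorded in the statement. The remaining work is purely algebraic: matching the quadratic-formula roots of $p$ to the displayed $s_1,s_2$ and verifying that their reality condition $(bd-c^2)(b-1)>0$ is automatically satisfied in each surviving case.
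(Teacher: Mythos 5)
Your proof is correct and follows essentially the same route as the paper: your quadratic $p(s)=f(s)^2-(bs+c)^2$ is exactly the paper's polynomial $P(s)=b(1-b)s^2+2(1-b)cs-c^2+d$, and your ``key observation'' $f^2=p+(bs+c)^2$ is precisely the paper's remark $P(s)=Q(s)-(bs+c)^2\leq Q(s)$, which likewise collapses the two domain conditions into the sign of a single quadratic. The only cosmetic difference is that you run the sign analysis through the vertex value $p(-c/b)=(bd-c^2)/b$ and the leading coefficient $b(1-b)$, whereas the paper computes the discriminants via $\Delta_P=(1-b)\Delta_Q$; the case-by-case conclusions are identical.
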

	
	\begin{proof}
		
		Let us consider the polynomials $P$ and $Q$ given by
		\[P(s) = b(1 - b)s^2 + 2(1 - b)cs - c^2 + d\]
		and
		\[Q(s) = bs^2 + 2cs + d.\]
		If we denote by $\Delta_P$ and $\Delta_Q$ the discriminants of $P$ and $Q$, respectively, one can check that $\Delta_Q = 4(c^2-bd)$ and $\Delta_P = (1 - b)\Delta_Q.$ Furthermore, by Remark \ref{remark-derivadas}, since $f(s) = \sqrt{Q(s)},$ the interval $I$ of definition of $\alpha$ is determined by the inequalities $P(s) \geq 0$ and $Q(s) > 0.$ But, a direct computations shows that $P(s) = Q(s) - (bs + c)^2 \leq Q(s),$ which gives
		\begin{equation}\label{maximalset}
			I \subseteq \{s\in \R : P(s) \geq 0\}.
		\end{equation}
		
		\noindent 1. Assuming that $K<0$, we have $0\leq c^2<bd.$ This implies that $b$ and $d$ must have the same sign and are non-zero. In this case, $\Delta_Q < 0,$ which means that $Q$ has no roots. Thus, by \eqref{maximalset}, the maximal set of definition of $\alpha$ is the subset of $\R$ such that $P$ is non-negative.
		\begin{itemize}
			\item[$-$] Assume $0< b \leq 1.$ Then $\Delta_P \leq 0$ and consequently the maximal set of definition of $\alpha$ is $\R$.
			
			\item[$-$] Assume $b>1.$ This means that $\Delta_P > 0$, and thus, since $b>0,$ we must have $I \subseteq (s_1, s_2)$ where $s_1$ and $s_2$ are the roots of $P$. It is direct to see that
			\[s_1 = -\frac{1}{b}\left(c + \sqrt{\frac{bd - c^2}{b - 1}}\right)\ \ \mbox{and}\ \ \ s_2 = -\frac{1}{b}\left(c - \sqrt{\frac{bd - c^2}{b - 1}}\right).\]
			
		\end{itemize}
		
		\noindent 2. Assuming that $K>0$, we have $c^2 > bd$, that is, $\Delta_Q > 0.$ If $b > 1,$ then $b(1-b)<0$ and $\Delta_P < 0$, which means that $P$ is negative everywhere. Moreover, if $b=1$ then $P$ is non-negative if and only if $d\geq c^2$ and it cannot occur by the assumption on the Gaussian curvature of $\Sigma_f.$ These contradictions show that $b<1$.	Let $s_1$ and $s_2$ be the roots of $P$ given as before. Thus, we have the following cases.
		
		\begin{itemize}
			\item[$-$] If $0<b<1,$ we get either $I \subseteq (-\infty, s_1)$ or $I \subseteq(s_2, +\infty)$.
			
			\item[$-$] If $b < 0,$ we get $I \subseteq (s_1, s_2)$.
			
			\item[$-$] If $b=0$, we get $c\neq 0$ and $P(s) = 2cs - c^2 + d.$ Then $c>0$ implies $I \subseteq (\tfrac{c^2-d}{2c},+\infty)$, while $c<0$ implies $I \subseteq (-\infty,\tfrac{c^2-d}{2c})$.
		\end{itemize}
	\end{proof}
	
	The complete surfaces from the previous result have some geometric properties that will be useful in our application.
	
	\begin{theorem}\label{a=0prop}
		Let $\Sigma_f$ be a rotational Ricci surface in $\R^3$ with negative Gaussian curvature, where $f:\R \to \R$ is a positive smooth function given by
		\[f(s) = \sqrt{bs^2 + 2cs + d}\]
		for $0 < b \leq 1$ and $c^2 < bd.$ Then:
		\begin{itemize}
			\item[1.] $\Sigma_f$ is a proper surface;
			\item[2.] $\Sigma_f$ is a symmetric surface with respect to $xy$-plane;
			\item[3.] $\Sigma_f$ is a catenoid if and only if $b = 1.$
		\end{itemize}
	\end{theorem}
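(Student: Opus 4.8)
The plan is to treat the three statements separately, all resting on the explicit profile curve supplied by Lemma \ref{lemma}: for $a=0$ one has $g'(s)=\sqrt{P(s)}/f(s)$, where $P(s)=f(s)^2-(bs+c)^2=b(1-b)s^2+2c(1-b)s+(d-c^2)$ is exactly the polynomial appearing in the proof of Theorem \ref{a=0}. I would first record that under $0<b\leq 1$ and $c^2<bd$ both $Q(s)=bs^2+2cs+d$ and $P(s)$ have negative discriminant and positive leading coefficient, hence are strictly positive on all of $\R$; this reconfirms that $f$ is positive and smooth with $|f'|<1$ on $I=\R$, so that $g$ is strictly increasing.

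For Part 1, viewing $X$ as a map from $\R\times\Sbb^1$, I would show it is proper. Since $|X(s,\theta)|^2=f(s)^2+g(s)^2\geq f(s)^2=bs^2+2cs+d$ and $b>0$, we have $|X(s,\theta)|\to\infty$ as $|s|\to\infty$. Thus if $X(s_n,\theta_n)$ lies in a bounded set, the $s_n$ stay bounded and, by compactness of $\Sbb^1$, a subsequence of $(s_n,\theta_n)$ converges; this gives properness.

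For Part 2, the key is the vertex $s^*=-c/b$ of $Q$, at which $f'(s^*)=(bs^*+c)/f(s^*)=0$. Setting $t=s-s^*$ and using $bs^*+c=0$, a direct substitution yields $f(s^*+t)^2=f(s^*)^2+bt^2$ and $P(s^*+t)=f(s^*)^2+b(1-b)t^2$, both even in $t$. Hence $f(s^*+t)=f(s^*-t)$, and $g'(s^*+t)=\sqrt{P(s^*+t)}/f(s^*+t)$ is even in $t$; choosing $s_0=s^*$ so that $g(s^*)=0$ then makes $g$ odd about $s^*$. Consequently the reflection $(x,y,z)\mapsto(x,y,-z)$ sends $X(s^*+t,\theta)$ to $X(s^*-t,\theta)$, which is the desired symmetry of $\Sigma_f$ with respect to the $xy$-plane.

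For Part 3, I would use the mean curvature formula \eqref{eq-H_f}, which for $a=0$ reduces to $H(s)=\tfrac{(b-1)f(s)}{2f(s)\sqrt{P(s)}}=\tfrac{b-1}{2\sqrt{P(s)}}$. Since $P>0$ everywhere, $H\equiv 0$ if and only if $b=1$; and when $b=1$ the surface is a complete rotational minimal surface with non-vanishing Gaussian curvature, hence a catenoid by the classical classification of rotational minimal surfaces in $\R^3$, while conversely a catenoid is minimal and forces $b=1$. The only place requiring care is the bookkeeping in Part 2, but once the vertex $s^*$ is located the evenness of $f^2$ and of $P$ is immediate, so I anticipate no substantive obstacle.
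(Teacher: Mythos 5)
Your proof is correct and follows essentially the same route as the paper's: recentring at the vertex $s^*=-c/b$ (the paper's change of parameter $s(t)=t-\tfrac{c}{b}$), parity of $f$ and of $g'$ to obtain the reflection symmetry after normalising $g(s^*)=0$, and the reduction of part 3 to the formula $H=\tfrac{b-1}{2\sqrt{P}}$. The only deviations are minor and harmless: your properness argument via $\|X(s,\theta)\|\geq f(s)\to\infty$ is actually simpler than the paper's (which shows $\hat g(t)\to\pm\infty$ through an $\arcsinh$ comparison), your oddness of $g$ is by direct substitution rather than the paper's ODE-uniqueness argument, and in your preliminary step note that for $b=1$ the polynomial $P$ degenerates to the positive constant $d-c^2$ (zero leading coefficient), which still gives $P>0$ on $\R$ as needed.
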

	
	\begin{proof}
		Since $b > 0,$ we can consider the change of parameter $s(t) = t-\frac{c}{b}$. In this case, one can check that for $\hat{f} = f\circ s$ and $\hat{g} = g\circ s$ we have
		\begin{align}
			\hat{f}(t)&=\sqrt{\frac{b^2t^2+bd-c^2}{b}},\label{fparametro-t}\\
			\hat{g}(t)&=\int_{t_0}^t\sqrt{\frac{b^2(1-b)\tau^2
					+bd-c^2}{b^2\tau^2+bd-c^2}}\dif\tau\label{gparametro-t},
		\end{align}
		where $t_0 = s_0+\frac{c}{b}.$ Hence, by \eqref{fparametro-t}, $\hat{f}$ is an even function well-defined on $\R$ and goes to infinity as $t \to \pm\infty.$ If $0< b \leq 1,$ then $\hat{g}$ is also well-defined on $\R$ by \eqref{gparametro-t}. We set $t_0 = 0$ and observe that for $t\geq 0,$
		\begin{align}\label{g-function}
			\nonumber	\hat{g}(t) &= \int_{0}^t\sqrt{\frac{b^2(1-b)\tau^2
					+bd-c^2}{b^2\tau^2+bd-c^2}}\dif\tau\\
			&\geq \int_{0}^t\sqrt{\frac{bd-c^2}{b^2\tau^2+bd-c^2}}\dif\tau\\
			\nonumber	&= \frac{\sqrt{bd-c^2}}{b}\arcsinh\left(\frac{bt}{\sqrt{bd-c^2}}\right).
		\end{align}
		Moreover, $\hat{g}$ is an odd function since $\hat{g}$ and $\widetilde{g}(t):=-\hat{g}(-t)$ are both solutions of the ODE
		\begin{equation*}
			\begin{aligned}
				\hat{g}'(t) &= \sqrt{\frac{b^2(1-b)t^2 +bd-c^2}{b^2t^2+bd-c^2}},\\
				\hat{g}(0) &= 0.
			\end{aligned}
		\end{equation*}
		Therefore, $\hat{g}(t)\to\pm \infty$ when $t\to\pm \infty$, respectively. This proves that $\Sigma_f$ is proper and symmetric with respect to $xy$-plane.
		
		By \eqref{eq-H_f}, we have that the mean curvature of $\Sigma_f$ is
		\[H(t) = \frac{(b - 1)\sqrt{b}}{2\sqrt{b^2(1-b)t^2+bd-c^2}}.\]
		In particular, $\Sigma_f$ is minimal if and only if $b=1$, since $b>0$.
	\end{proof}
	
	\begin{figure}[!h]
		\centering
		{\includegraphics[scale=.75]{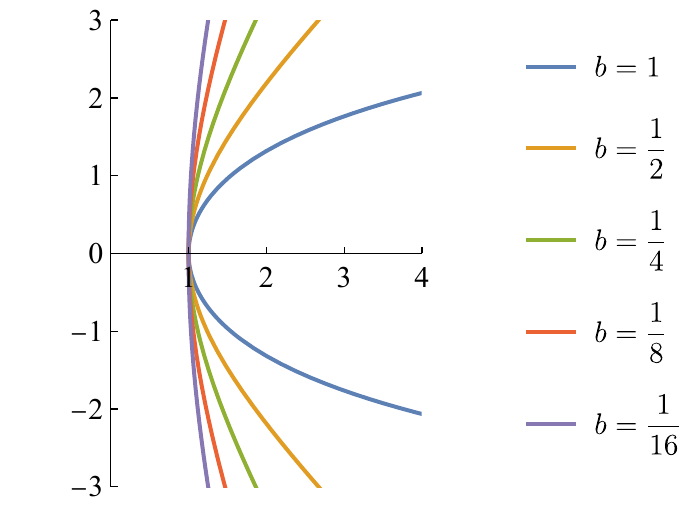}}
		\hfil
		{\includegraphics[scale=.75]{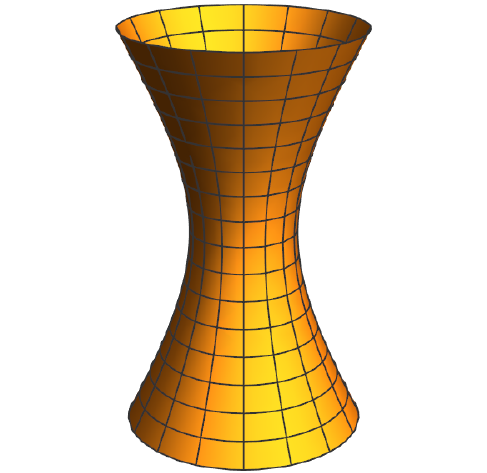}}
		\caption{Catenoidal-Ricci surface and profile curves for $(c,d) = (0,1).$}
	\end{figure}
	
	\begin{figure}[!h]
		\centering
		{\includegraphics[scale=.75]{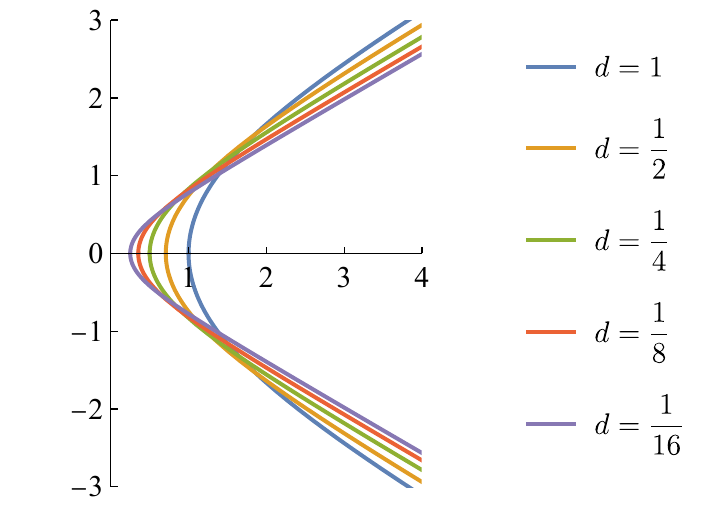}}
		\hfil
		{\includegraphics[scale=.75]{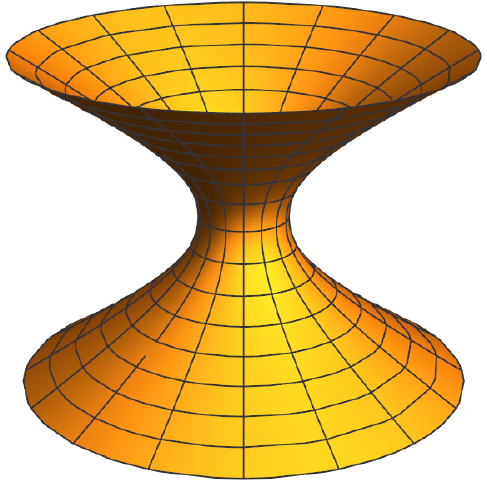}}
		\caption{Catenoidal-Ricci surface and profile curves for $(b,c)=(\frac{3}{4},0).$}
	\end{figure}
	
	The surfaces of Theorem \ref{a=0prop} will be referred as \textit{catenoidal-Ricci surfaces} and denoted by $\Ccal_b(r),$ where $r$ is \textit{neck radius} of the surface given by the positive solution of $r^2 = \frac{bd - c^2}{b}.$ Geometrically, the constant $r$ is the radius of the circle of $\Ccal_b(r) \cap \{z = 0\}.$
	
	For $c=0,$ taking the limit $b \to 0$ in Theorem \ref{a=0prop}, we have that $f(s) \to \sqrt{d},$ and therefore we get a right circular cylinder by Proposition \ref{simplest}. As a consequence, we obtain an interpolation between a cylinder $\Ccal_0(\sqrt{d})$ (which is a constant mean curvature surface) and a catenoid $\Ccal_1(\sqrt{d})$ (which is a minimal surface) by catenoidal-Ricci surfaces.
	
	\begin{corollary}
		The one-parameter family $\{\Ccal_b(\sqrt{d})\}_{0 < b \leq 1}$ of catenoidal-Ricci surfaces interpolates the cylinder $\Ccal_0(\sqrt{d})$ and the catenoid $\Ccal_1(\sqrt{d}).$
	\end{corollary}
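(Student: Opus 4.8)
The plan is to fix $c = 0$ and read the corollary as a statement about the continuous dependence of the profile data on the parameter $b$. With $c = 0$, the function of Theorem \ref{a=0prop} becomes $f_b(s) = \sqrt{bs^2 + d}$, which is well-defined and positive on all of $\R$ provided $d > 0$, and the neck-radius formula $r^2 = (bd - c^2)/b$ collapses to $r = \sqrt{d}$ for every $b \in (0,1]$. Thus the whole family $\{\Ccal_b(\sqrt d)\}$ shares the neck radius $\sqrt d$, and it remains only to identify the two ends of the parameter interval and to argue that the surfaces deform continuously between them.

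First I would dispatch the endpoint $b = 1$, which is immediate: by item 3 of Theorem \ref{a=0prop} the surface $\Ccal_1(\sqrt d)$ is minimal, hence it is precisely the catenoid with neck radius $\sqrt d$ centred at the origin.

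Next I would analyse the limit $b \to 0^+$ using the reparametrised data \eqref{fparametro-t}--\eqref{gparametro-t} with $c = 0$, namely $\hat f_b(t) = \sqrt{bt^2 + d}$ and $\hat g_b(t) = \int_0^t \sqrt{\frac{b^2(1 - b)\tau^2 + bd}{b^2\tau^2 + bd}}\dif\tau$. For each fixed $\tau$, dividing numerator and denominator by $b$ shows the integrand tends to $\sqrt{d/d} = 1$ as $b \to 0^+$; moreover it is bounded above by $1$ on the whole range, since $0 < b \le 1$ forces $b^2(1 - b)\tau^2 + bd \le b^2\tau^2 + bd$. Dominated convergence then yields $\hat g_b(t) \to t$ and $\hat f_b(t) \to \sqrt d$, uniformly on compact sets of $t$. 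Hence the profile curves converge to $t \mapsto (\sqrt d, 0, t)$, whose rotation about the $z$-axis is the right circular cylinder $\Ccal_0(\sqrt d)$; this matches the flat case of Proposition \ref{simplest} with the affine limit $f \equiv \sqrt d$.

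Finally, to justify the word \emph{interpolates}, I would observe that the integrand above extends continuously to $b = 0$ with value $1$, so that $(t,b) \mapsto (\hat f_b(t), \hat g_b(t))$ is jointly continuous on $\R \times [0,1]$ and uniformly so on compact sets of $t$. Consequently the family $\{\Ccal_b(\sqrt d)\}_{0 < b \le 1}$ deforms continuously, approaching the cylinder $\Ccal_0(\sqrt d)$ as $b \to 0^+$ and terminating at the catenoid $\Ccal_1(\sqrt d)$ at $b = 1$, which is the assertion. The only mildly delicate point is the convergence of the height integral $\hat g_b$ as $b \to 0$; everything else is a direct reading of the formulas already established in Theorem \ref{a=0prop}.
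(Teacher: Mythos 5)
Your proof is correct and takes essentially the same route as the paper: fixing $c=0$ so that the neck radius is $\sqrt{d}$ for every $b$, identifying $\Ccal_1(\sqrt d)$ as the catenoid via item 3 of Theorem \ref{a=0prop}, and recovering the cylinder $\Ccal_0(\sqrt d)$ in the limit $b\to 0^+$ via Proposition \ref{simplest}. The only difference is that you make the convergence of the height function $\hat g_b$ (and hence of the profile curves, uniformly on compact sets) explicit by dominated convergence, a step the paper leaves implicit by recording only the limit $f(s)\to\sqrt d$.
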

	
	\subsection{Case \texorpdfstring{$b = 0$}{b=0}}
	
	The surfaces $\Sigma_f$ for which $a \neq 0$ and $b = 0$ in Lemma \ref{lemma} are characterised by  $f(s)f'(s) = af(s) + c$. Firstly
	note that $f(s) = -\tfrac{c}{a}$ is a trivial solution in this case. However, this solution produces surfaces whose Gaussian curvature is identically zero, which is the situation studied in Proposition \ref{simplest}. This allows us to suppose that $af(s)+c$ is not identically zero on $I$. We claim that in this case $af(s)+c \neq 0$ for each $s \in I$. In order to show this, consider the open subset $J\subseteq I$ defined by
	\[J = \{s \in I : af(s)+c\neq 0\}.\]
	By integration, one can check that for each $s \in J,$
	\begin{equation}\label{solution}
		af(s) - c\log|af(s) + c| = a^2s + d,
	\end{equation}
	for some $d\in \R$. The Gaussian curvature of $\Sigma_f$ for each $s \in J$ is given by
	\[K(s) = \frac{c\left(af(s)+c\right)}{f(s)^4}.\]
	Therefore, we also have that $c \neq 0.$ If $af(s_*) + c = 0$ for some $s_* \in I,$ then there exists a sequence $\{s_n\}_n\subseteq J$ that converges to $s_*$ such that, by \eqref{solution}, we have
	\begin{equation}\label{sstar}
		s_* = \lim_{n \to +\infty}\frac{1}{a^2}\left(af(s_n) - c\log|af(s_n) + c| - d\right) = \pm \infty
	\end{equation}
	depending on the sign of $c$. This is a contradiction and it shows our claim. It follows that the expression $af(s) + c$ has the same sign at each $s \in I.$
	
	\begin{theorem}[Case $b = 0$]
		Let $\Sigma_f$ be a rotational Ricci surface in $\R^3,$ where $f:I \to \R$ is a positive smooth function defined implicitly by
		\[af(s) - c\log\left(af(s) + c\right) = a^2s + d\]
		for some $a, c, d \in \R$ with $a \neq 0 \neq c$ such that $af(s) + c > 0$ for each $s \in I.$ Suppose that the Gaussian curvature $K$ of $\Sigma_f$ does not vanish and set
		\[s_0 = \frac{c}{a(1 - a)} - \frac{c}{a^2}\log\left(\frac{c}{1 - a}\right) - \frac{d}{a^2}\]
		when $(1-a)c>0.$ Then, $f$ is a strictly increasing diffeomorphism on $I,$ and one of the following holds.
		\begin{itemize}
			\item[1.] If $K < 0,$ then $a \in (0, +\infty),$ $c \in (-\infty, 0),$ $f$ is asymptotic to $-\frac{c}{a}$ and
			\begin{itemize}
				\item[$-$] either $I \subseteq \R$ and $f(I) \subseteq (-\frac{c}{a}, +\infty)$ for $0 < a \leq 1$;
				
				\item[$-$] or $I \subseteq (-\infty, s_0)$ and $f(I) \subseteq (-\tfrac{c}{a},\frac{c}{1-a})$ for $a > 1$.
			\end{itemize}
			
			\item[2.] If $K > 0,$ then $a \in (-\infty, 0) \cup (0, 1),$ $c \in (0, +\infty)$ and
			\begin{itemize}
				\item[$-$] either $I \subseteq (s_0, +\infty)$ and $f(I) \subseteq (\frac{c}{1-a},-\tfrac{c}{a})$ for $a < 0$;
				
				\item[$-$] or $I \subseteq (s_0, +\infty)$ and $f(I) \subseteq (\frac{c}{1-a}, +\infty)$ for $0 < a < 1$.
			\end{itemize}
		\end{itemize}
	\end{theorem}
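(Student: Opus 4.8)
The relation $ff' = af + c$ from Lemma~\ref{lemma} rewrites as $f'(s) = (af(s)+c)/f(s)$, and since $f>0$ while the hypothesis guarantees $af+c>0$ throughout $I$, we get $f'>0$ everywhere. Hence $f$ is strictly increasing and is a diffeomorphism onto its image, which settles the monotonicity assertion. For the sign of $K$ I would use the curvature formula $K = c(af+c)/f^4$ recorded just before the statement: because $f>0$ and $af+c>0$, the sign of $K$ equals the sign of $c$. Thus $K<0$ forces $c<0$ and $K>0$ forces $c>0$, giving the first half of each bullet.

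The plan for the remaining constraints is to combine two inequalities: the standing hypothesis $af+c>0$, and the admissibility condition $|f'|\le 1$, which by Remark~\ref{remark-derivadas} reads $(af+c)^2\le f^2$; since $af+c$ and $f$ are both positive this is simply $af+c\le f$, i.e. $(a-1)f\le -c$. When $c<0$ the hypothesis $af+c>0$ gives $af>-c>0$, hence $a>0$ and $f>-c/a$; the inequality $(a-1)f\le -c$ is automatic for $0<a\le 1$ (leaving $f(I)\subseteq(-c/a,+\infty)$) but for $a>1$ produces the upper bound $f\le c/(1-a)$, giving $f(I)\subseteq(-c/a,\,c/(1-a))$. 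When $c>0$ the inequality $(a-1)f\le -c<0$ forces $a<1$, hence $a\in(-\infty,0)\cup(0,1)$, and yields the lower bound $f\ge c/(1-a)$; the hypothesis $af+c>0$ is automatic for $0<a<1$ (so $f(I)\subseteq(c/(1-a),+\infty)$) and for $a<0$ gives the upper bound $f<-c/a$ (so $f(I)\subseteq(c/(1-a),\,-c/a)$). This recovers all the stated sign conditions and ranges for $f(I)$.

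To locate the interval $I$ and the asymptotics, I would parametrise by $u=af+c\in(0,+\infty)$ and rewrite the implicit relation as $a^2 s = u - c\log u - c - d$, so that $s$ depends smoothly on $u$ with $ds/du = f/(au)$ of constant sign equal to that of $a$. Reading off the endpoints: as $u\to 0^+$, equivalently $f\to -c/a$, the term $-c\log u$ sends $s\to-\infty$ when $c<0$ and $s\to+\infty$ when $c>0$; as $u\to+\infty$ the linear term dominates and $s\to+\infty$; and at the boundary value $f=c/(1-a)$, where $|f'|=1$, the implicit relation evaluates to $s_0$. Combining these limits with the monotonicity of $s$ in $u$ (hence in $f$) yields the maximal $I=\R$ for $0<a\le 1$ with $K<0$; $I\subseteq(-\infty,s_0)$ for $a>1$ with $K<0$; and $I\subseteq(s_0,+\infty)$ in both $K>0$ subcases. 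In the $K<0$ case the limit $f\to -c/a$ as $s\to-\infty$ is exactly the claimed asymptote.

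The only genuinely delicate point is the evaluation at $f=c/(1-a)$. Here one first checks $u=af+c=c/(1-a)=f$, confirming $f'=1$, and then the simplification $\tfrac{1}{a^2}\big(\tfrac{c}{1-a}-c-c\log\tfrac{c}{1-a}-d\big)=\tfrac{c}{a(1-a)}-\tfrac{c}{a^2}\log\tfrac{c}{1-a}-\tfrac{d}{a^2}$, which hinges on the cancellation $\tfrac{1}{1-a}-1=\tfrac{a}{1-a}$, produces precisely the stated $s_0$. Everything else is sign-chasing on the two inequalities above, and the hypothesis $(1-a)c>0$ is exactly the condition making $\log\frac{c}{1-a}$, hence $s_0$, well defined in the three subcases where it occurs.
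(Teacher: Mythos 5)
Your proof is correct and follows essentially the same route as the paper's: monotonicity from $ff'=af+c>0$, the sign of $K$ read off from $K=c(af(s)+c)/f(s)^4$, the parameter and range restrictions obtained by combining $af+c>0$ with $(af+c)^2\le f^2$, and the identification of $s_0$ as the parameter value at which $f=\tfrac{c}{1-a}$. The only difference is organizational: you extract all endpoint behaviour uniformly from the explicit inverse relation $a^2s=u-c\log u -c-d$ with $u=af+c$ and the sign of $ds/du=f/(au)$, whereas the paper derives the same limits case by case directly from \eqref{solution} (as in the argument of \eqref{sstar}); this is a slightly cleaner bookkeeping of the same idea, not a different method.
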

	
	\begin{proof}
		Since $f>0$ on $I,$ then $f'>0$ on $I.$ This guarantees that $f$ is strictly increasing on $I,$ the inverse of $f$ exists and is also smooth by \eqref{solution}. Moreover, by Remark \ref{remark-derivadas} the profile curve of $\Sigma_f$ is defined in the interval $I$ such that
		\begin{equation}\label{inequalities}
			I \subseteq \{s\in \R: 0<af(s)+c < f(s)\}.
		\end{equation}
		
		\noindent 1. Assuming that $K < 0,$ we have $c < 0.$ In this case, \eqref{inequalities} implies that $a > 0$ and $-\frac{c}{a} < f(s)$. Furthermore, by a similar argument used in \eqref{sstar}, we have $s\to -\infty$ when $af(s) + c\to 0$.
		
		\begin{itemize}
			\item[$-$] If $0 < a \leq 1,$ then \eqref{solution} is well-defined for each real number because $f$ is a strictly monotone function. If we admit that $af(s) + c \to 0$ when $s \to +\infty,$ equation \eqref{solution} implies
			\[- c = \lim_{s \to +\infty}af(s) = \lim_{s \to +\infty} \left(c\log\left(af(s) + c\right) + a^2s + d\right) = +\infty,\]
			which is an absurd. Therefore, $af(s) + c$ does not approach zero when $s \to +\infty$ and consequently, by the limit before, we have that $af(s) \to +\infty.$ Since $a$ is positive, then $f(s) \to +\infty.$ We conclude that $f$ is a diffeomorphism at most from $\R$ to $(-\tfrac{c}{a},+\infty).$
			
			\item[$-$] If $a > 1,$ then by \eqref{inequalities} we get $f(s) < \frac{c}{1-a}$. Since $f(s) \to \frac{c}{1-a}$ when $s \to s_0,$ we conclude that $f$ is a diffeomorphism at most from $(-\infty,s_0)$ to $(-\tfrac{c}{a},\frac{c}{1-a}).$
		\end{itemize}
		
		\noindent 2. Assuming that $K > 0,$ we have $c > 0.$ In this case, \eqref{inequalities} implies that $a < 1$ and $\frac{c}{1-a} < f(s)$. Furthermore, $f(s) \to \frac{c}{1-a}$ when $s \to s_0.$
		
		\begin{itemize}
			\item[$-$] Assume that $a < 0.$ As argued in the calculation of \eqref{sstar}, we have that $s\to +\infty$ when $af(s) + c\to 0$. Therefore, $f$ is a diffeomorphism at most from $(s_0,+\infty)$ to $(\frac{c}{1-a},-\tfrac{c}{a}).$
			
			\item[$-$] If $0 < a < 1,$ by using the same argument as before, one can see that $f(s) \to +\infty$ when $s\to+\infty,$ and this means that $f$ is a diffeomorphism at most from $(s_0, +\infty)$ to $(\tfrac{c}{1-a},+\infty).$
		\end{itemize}
	\end{proof}
	
	By making evident modifications in the proof, we can state the following result considering that $af(s) + c$ is negative on $I.$
	
	\begin{theorem}[Case $b = 0$]
		Let $\Sigma_f$ be a rotational Ricci surface in $\R^3,$ where $f:I \to \R$ is a positive smooth function defined implicitly by
		\[af(s) - c\log\left(- af(s) - c\right) = a^2s + d\]
		for some $a, c, d \in \R$ with $a \neq 0 \neq c$ such that $af(s) + c < 0$ for each $s \in I.$ Suppose that the Gaussian curvature $K$ of $\Sigma_f$ does not vanish and set
		\[s_0 = \frac{c}{a(1 + a)} - \frac{c}{a^2}\log\left(- \frac{c}{1 + a}\right) - \frac{d}{a^2}\]
		when this number is well-defined. Then, $f$ is a strictly decreasing diffeomorphism on $I,$ and one of the following holds.
		\begin{itemize}
			\item[1.] If $K < 0,$ then $a \in (-\infty, 0),$ $c \in (0, +\infty),$ $f$ is asymptotic to $-\frac{c}{a}$ and
			\begin{itemize}
				\item[$-$] either $I \subseteq \R$ and $f(I) \subseteq (-\frac{c}{a}, +\infty)$ for $-1 \leq a < 0$;
				
				\item[$-$] or $I \subseteq (s_0, +\infty)$ and $f(I) \subseteq (-\frac{c}{a}, -\tfrac{c}{1 + a})$ for $a < -1$.
			\end{itemize}
			
			\item[2.] If $K > 0,$ then $a \in (-1, 0) \cup (0, +\infty),$ $c \in (-\infty, 0)$ and
			\begin{itemize}
				\item[$-$] either $I \subseteq (-\infty, s_0)$ and $f(I) \subseteq (-\frac{c}{1+a}, -\tfrac{c}{a})$ for $a > 0$;
				
				\item[$-$] or $I \subseteq (-\infty, s_0)$ and $f(I) \subseteq (\frac{c}{1-a}, +\infty)$ for $-1<a<0$.
			\end{itemize}
		\end{itemize}
	\end{theorem}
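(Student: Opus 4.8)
The plan is to repeat the proof of the preceding theorem almost verbatim, reversing throughout the sign of the quantity $af(s)+c$. From the case $b=0$ of Lemma \ref{lemma}, $f$ satisfies $f(s)f'(s)=af(s)+c$; separating variables as before and using that now $|af+c|=-(af+c)$ turns \eqref{solution} into the implicit relation $af-c\log(-af-c)=a^2s+d$ of the statement. Since $f>0$ and $af+c<0$ on $I$, we read off $f'<0$, so $f$ is strictly decreasing; as $f'$ never vanishes, $f$ is a diffeomorphism onto its image, with smooth inverse by \eqref{solution} exactly as in the previous proof.

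Next I would record the domain restriction from Remark \ref{remark-derivadas}. The admissibility condition $|f'|\le 1$ is $(af+c)^2\le f^2$, which under $af+c<0$ and $f>0$ becomes $0<-af-c<f$, the mirror of \eqref{inequalities}. The sign analysis then proceeds from the Gaussian curvature $K=c(af+c)/f^4$: because $af+c<0$, one has $K<0$ exactly when $c>0$ and $K>0$ exactly when $c<0$. Combining $af+c<0$ with $f>0$ forces $a<0$ in the first case, while in the second case the right-hand inequality $(1+a)f>-c$ with $-c>0$ forces $1+a>0$, i.e. $a>-1$; this recovers the stated parameter ranges.

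The two bounds on the range of $f$ come from two barriers: the asymptotic barrier $f=-c/a$, where $af+c$ vanishes and which $f$ can only approach as $s\to\pm\infty$, and the admissibility barrier $f=-c/(1+a)$, where $-af-c=f$ (equivalently $|f'|=1$) and which is attained at a finite parameter value. The sign of $1+a$ decides whether the second barrier lies above or below $-c/a$, and hence whether $f(I)$ is bounded or a half-line; this is where the dichotomy $a\lessgtr -1$ replaces the earlier $a\lessgtr 1$. To locate the finite endpoint $s_0$ I would reuse the asymptotic analysis of \eqref{sstar}: as $af+c\to 0$ one has $\log(-af-c)\to-\infty$, so the implicit relation forces $s\to+\infty$ when $c>0$ and $s\to-\infty$ when $c<0$, which fixes on which side of $I$ the finite endpoint lies. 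Substituting $f=-c/(1+a)$ into the implicit relation and solving for $s$ then produces the constant $s_0$, and assembling the four regimes yields the stated inclusions for $I$ and $f(I)$.

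The only genuine difficulty is bookkeeping. Three reversals occur simultaneously relative to the companion theorem — the monotonicity of $f$, the direction of the admissibility inequality, and the sign of the $s\to\pm\infty$ limit — so a single mishandled sign would corrupt the final inclusions. I would therefore cross-check each of the four regimes against its counterpart in the preceding theorem, confirming in particular that the asymptotic radius is $-c/a$, that the admissibility radius is $-c/(1+a)$, and that the substitution into the implicit relation reproduces the displayed $s_0$.
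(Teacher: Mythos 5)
Your proposal is correct and takes essentially the same approach the paper intends: the paper gives no separate proof of this theorem, saying only that it follows ``by making evident modifications'' to the proof of the preceding case, and your sign-reversed reworking (decreasing $f$, admissibility $0<-af(s)-c<f(s)$, the flip $K<0\iff c>0$, $K>0\iff c<0$, asymptotic barrier $-\frac{c}{a}$ versus admissibility barrier $-\frac{c}{1+a}$, and the endpoint obtained by substituting $f=-\frac{c}{1+a}$ into the implicit relation) is precisely that modification. One caveat for your cross-checking step: actually carrying out that substitution yields $s_0=-\frac{c}{a(1+a)}-\frac{c}{a^2}\log\left(-\frac{c}{1+a}\right)-\frac{d}{a^2}$, whose first term differs in sign from the displayed $s_0$, and similarly the bound $\frac{c}{1-a}$ in the last subcase should read $-\frac{c}{1+a}$; these appear to be typos in the paper's statement (inherited from the companion theorem) rather than errors in your method, so your procedure is the right one even where it disagrees with the printed formulas.
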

	
	We remark some geometric properties of the complete surfaces that appear in the previous results. One of them says that these surfaces are asymptotic to a right circular cylinder, which is also a rotational Ricci surface. We will call them \textit{funnel-Ricci surfaces}. We only state and argue for the case $af(s) + c > 0$ because the other case is analogous and easily deduced.
	
	\begin{theorem}\label{b=0prop}
		Let $\Sigma_f$ be a rotational Ricci surface in $\R^3$ with negative Gaussian curvature, where $f:\R \to (-\frac{c}{a}, +\infty)$ is a positive smooth function defined implicitly by
		\[af(s) - c\log\left(af(s) + c\right) = a^2s + d\]
		for $0< a \leq 1$ and $c < 0.$ Then:
		\begin{itemize}
			\item[1.] $\Sigma_f$ is a proper surface;
			\item[2.] $\Sigma_f$ is asymptotic to the cylinder $\Ccal_0(-\frac{c}{a})$ when $s\to-\infty.$
		\end{itemize}
	\end{theorem}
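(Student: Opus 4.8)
The plan is to extract the global behaviour of the profile curve $\alpha(s)=(f(s),0,g(s))$ directly from the ODE $f(s)f'(s)=af(s)+c$, and then to deduce both properness and the cylindrical asymptotics from the limits of $f$, $f'$ and $g'$ at the two ends $s\to\pm\infty$.

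First I would record the boundary behaviour of $f$. Since $f'=(af+c)/f>0$ on $\R$, the function $f$ is a strictly increasing diffeomorphism onto $(-c/a,+\infty)$, and by the classification in the case $b=0$ (the branch $0<a\le1$ with $K<0$) one has $f(s)\to-c/a$ as $s\to-\infty$ and $f(s)\to+\infty$ as $s\to+\infty$. Writing $f'(s)=a+c/f(s)$ yields $f'(s)\to0$ as $s\to-\infty$ and $f'(s)\to a$ as $s\to+\infty$. Since $\alpha$ is unit-speed with $g'\ge0$, we have $g'=\sqrt{1-(f')^2}$, so $g'(s)\to1$ as $s\to-\infty$ and $g'(s)\to\sqrt{1-a^2}$ as $s\to+\infty$. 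These limits are the only analytic input I need; I would also note that $|f'|<1$ strictly on $\R$ (because $(a-1)f+c<0$), so $g$ is strictly increasing and $X$ is injective.

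For item 1, I would prove that $|X(s,\theta)|^2=f(s)^2+g(s)^2\to+\infty$ as $|s|\to\infty$; combined with the injectivity of $X$ this shows $X$ is a proper embedding. At the end $s\to+\infty$ this is immediate from $f(s)\to+\infty$. The only delicate end is $s\to-\infty$, where $f$ remains bounded (it tends to $-c/a$), so properness must be produced by the height: from $g'(s)\to1$ there is $M$ with $g'>\tfrac12$ for $s<-M$, and integrating gives $g(s)\to-\infty$, hence $|X|\to+\infty$ there as well.

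For item 2, set $r=-c/a>0$. The distance from the point $X(s,\theta)$ to the cylinder $\Ccal_0(r)=\{x^2+y^2=r^2\}$ equals $|f(s)-r|$, which tends to $0$ as $s\to-\infty$; together with $g(s)\to-\infty$, with the tangent of $\alpha$ becoming vertical ($f'(s)\to0$, $g'(s)\to1$), and with $K(s)=c(af(s)+c)/f(s)^4\to0$, this exhibits $\Sigma_f$ as asymptotic to $\Ccal_0(r)$ along its downward end. The main conceptual obstacle is exactly this $s\to-\infty$ end: since the radius stays bounded there, both statements hinge on controlling the vertical escape $g(s)\to-\infty$, which is precisely the content of the limit $g'(s)\to1$.
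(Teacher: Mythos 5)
Your proof is correct, and it reaches the two key facts (the height escapes to $-\infty$ at the bounded-radius end, and the surface leaves every compact set at the other end) by a genuinely different mechanism than the paper. The paper inverts $f$ via the explicit relation \eqref{solution}, writes the height in the radius variable $t=f(s)$ as $\hat g(t)=\int_{t_0}^{t}\sqrt{\tau^{2}/(a\tau+c)^{2}-1}\,\dif\tau$, and derives $\hat g\to-\infty$ as $t\to-\frac{c}{a}$ and $\hat g\to+\infty$ as $t\to+\infty$ by comparing the integrand with $\tau/(a\tau+c)$ and with the constant $\sqrt{1-a^{2}}/a$, respectively. You instead stay in the arc-length parameter: from $f'=a+c/f$ and the limits of $f$ (quoted from the $b=0$ classification) you get $f'\to 0$, hence $g'=\sqrt{1-(f')^{2}}\to 1$, as $s\to-\infty$, and integrating the bound $g'>\frac{1}{2}$ gives $g\to-\infty$; at the other end properness comes directly from $f\to+\infty$, with no control of $g$ needed. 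Your route is more elementary (no inversion of $f$, no integral comparison) and is in fact more robust: the paper's lower bound $\frac{\sqrt{1-a^{2}}}{a}(t-t_{0})<\hat g(t)$ becomes vacuous at $a=1$, and its upper-bound comparison at the left end is applied with $t<t_{0}$, where integrating the pointwise inequality $\sqrt{x^{2}-1}<x$ in the decreasing direction reverses the sign (this is fixable, e.g.\ by using $\sqrt{x^{2}-1}\geq x-1$ instead); your argument bypasses both of these delicate points. What the paper's computation buys in exchange is the extra information that $g\to+\infty$ at the right-hand end as well, which your proof does not establish, but which is not needed for either assertion of the theorem.
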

	\begin{proof}
		We are going to show that $g$ is unbounded from above and from below. Let $s:(-\frac{c}{a},+\infty) \to \R$ be the inverse function of $f$, which is given as
		\[s(t) = \frac{1}{a^2}\left(at - c\log\left(at + c\right) - d\right)\]
		by equation \eqref{solution}. On one hand, since $f(s(t))=t,$ for $\hat{g} = g\circ s$ we get
		\begin{align*}
			\hat{g}(t) = \int_{t_0}^{t}\sqrt{\frac{\tau^2}{\left(a\tau + c\right)^2}-1}\dif \tau< \int_{t_0}^{t}\frac{\tau}{a\tau + c}\dif \tau=\frac{1}{a}\left(\tau-\frac{c}{a}\log(a\tau+c)\right)\Bigg|_{t_0}^t
		\end{align*}
		for some $t_0 > -\frac{c}{a}.$ Therefore, $\hat{g}(t) \to -\infty$ when $t\to-\frac{c}{a}.$ On the other hand, since $0<at+c < at$ by \eqref{inequalities}, then
		\begin{align*}
			\frac{\sqrt{1-a^2}}{a}(t-t_0) &= \int_{t_0}^{t}\frac{\sqrt{1-a^2}}{a}\dif \tau<  \int_{t_0}^{t}\sqrt{\frac{\tau^2}{\left(a\tau + c\right)^2}-1}\dif \tau=\hat{g}(t)
		\end{align*}
		and consequently $\hat{g}(t)\to +\infty$ when $t\to+\infty.$
		
		The second assertion follows from the fact that $\hat{g}(t) \to -\infty$ as $t\to-\frac{c}{a}$ and $f$ is asymptotic to $-\frac{c}{a}$ as shown above.
	\end{proof}
	
	\begin{figure}[!h]
		\centering
		{\includegraphics[scale=.75]{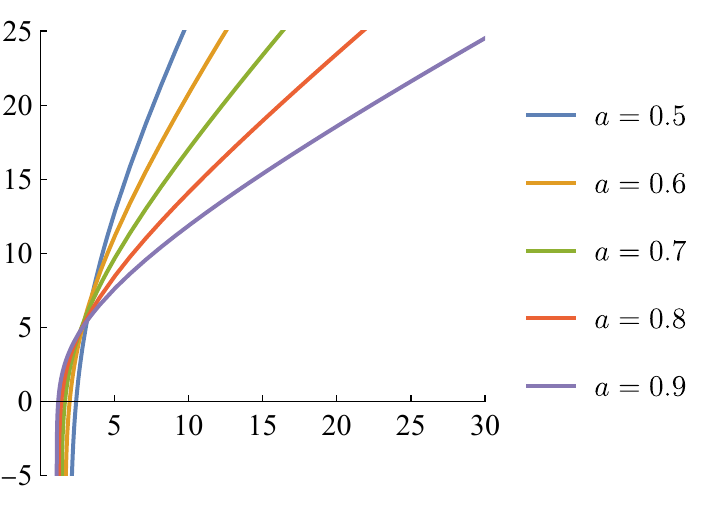}}
		{\includegraphics[scale=.75]{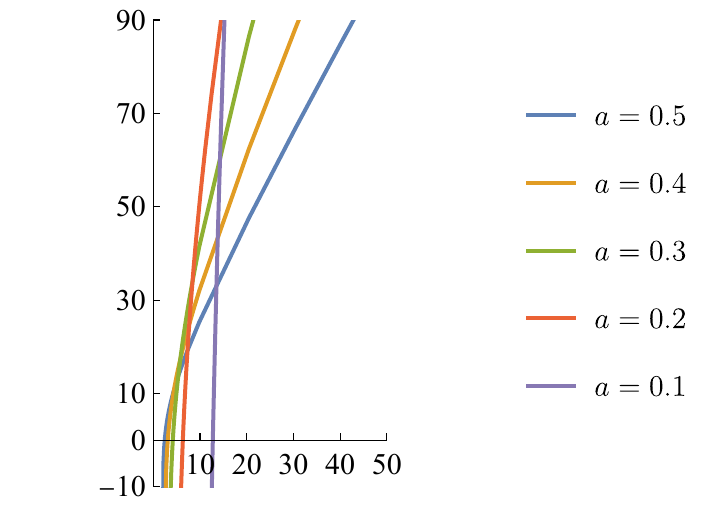}}
		\hfil
		{\includegraphics[scale=.75]{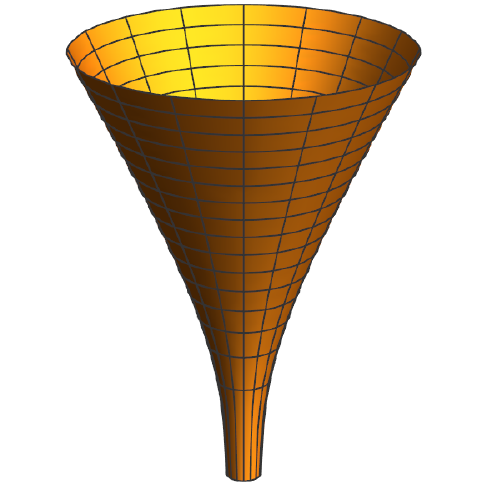}}
		\caption{Funnel-Ricci surface and profile curves for $c=-1$.}
	\end{figure}
	
	\begin{figure}[!h]
		\centering
		{\includegraphics[scale=.75]{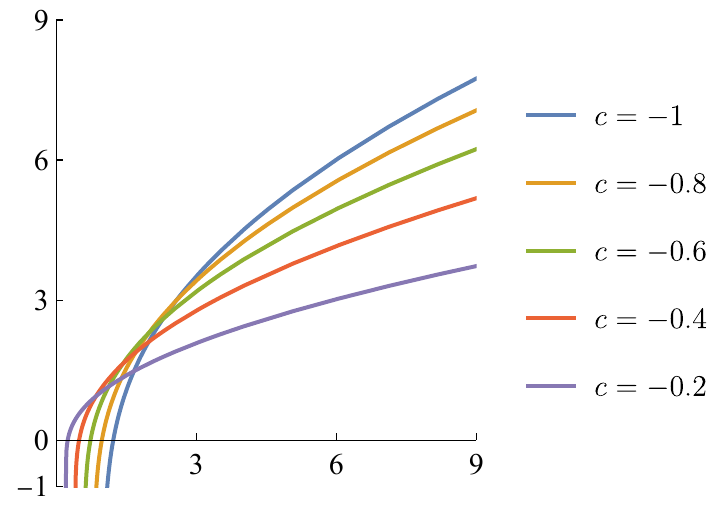}}
		{\includegraphics[scale=.75]{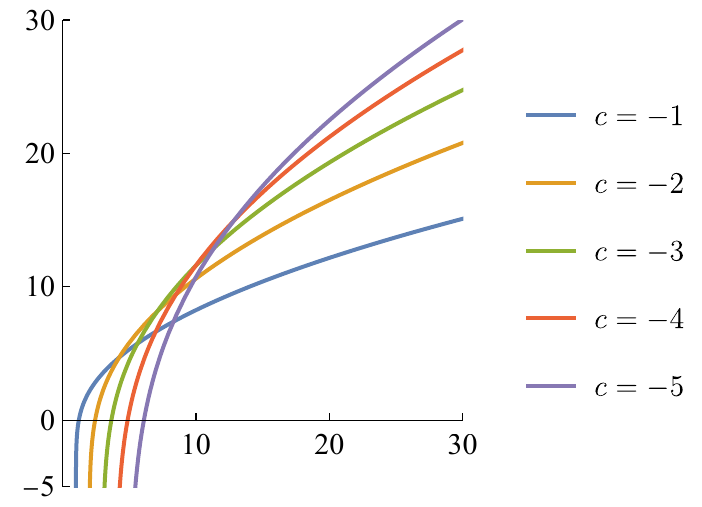}}
		\hfil
		{\includegraphics[scale=.75]{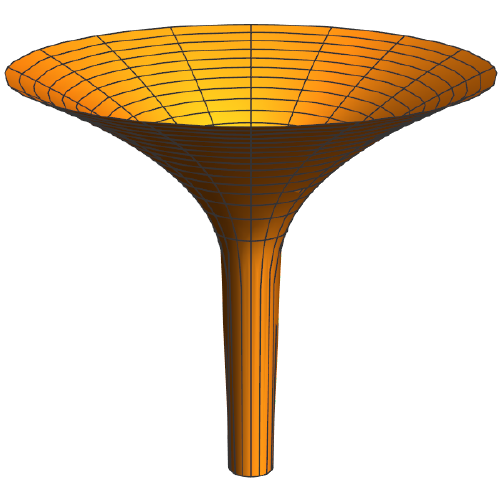}}
		\caption{Funnel-Ricci surface and profile curves for $a=1$.}
	\end{figure}
	
	\subsection{Case \texorpdfstring{$a \neq 0$}{a/=0}  and \texorpdfstring{$b \neq 0$}{b/=0}}
	
	The last possibility that we will consider for the constants of \eqref{ode-ricci} is non-zero $a$ and $b.$ In this case, we have that
	\begin{align*}
		f(s(t)) &= adt\exp\left(-\int_{t_0}^t \frac{\tau}{\tau^2 - \tau - \frac{b}{a^2}}\dif \tau\right),
		\intertext{with}
		s(t) &= d\exp\left(-\int_{t_0}^t \frac{\tau}{\tau^2 - \tau - \frac{b}{a^2}}\dif \tau\right) -\frac{c}{b}.
	\end{align*}
	Moreover, a direct computation using \eqref{eq-K_f} shows that the Gaussian curvature $K$ of $\Sigma_f$ is given by
	\begin{equation}\label{curvature3}
		K(t) = -\frac{b}{a^2d^2t^4}\left(t^2 - t - \frac{b}{a^2}\right) \exp\left(2\int_{t_0}^t \frac{\tau}{\tau^2 - \tau - \frac{b}{a^2}}\dif \tau\right).
	\end{equation}
	
	\begin{theorem}[Case $a \neq 0$ and $b\neq 0$]
		Let $\Sigma_f$ be a rotational Ricci surface in $\R^3,$ where $f:I \to \R$ is a positive smooth function defined by
		\begin{align*}
			f(s(t)) &= adt\exp\left(-\int_{t_0}^t \frac{\tau}{\tau^2 - \tau - \frac{b}{a^2}}\dif \tau\right),
			\intertext{where}
			s(t) &= d\exp\left(-\int_{t_0}^t \frac{\tau}{\tau^2 - \tau - \frac{b}{a^2}}\dif \tau\right) -\frac{c}{b}
		\end{align*}
		for some $a, b, c, d \in \R$ with $a \neq 0$, $b\neq 0$ and  $t_0 \in J := \{t \in \R;\ s(t) \in I\}.$ Suppose that the Gaussian curvature $K$ of $\Sigma_f$ does not vanish identically. Then,
		\[J \subseteq \left\{t\in\R : -1 \leq \frac{b}{at} + a \leq 1\right\} \cap \left\{t\in\R : t^2 - t - \frac{b}{a^2} \neq 0\right\}.\]
	\end{theorem}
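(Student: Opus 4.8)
The plan is to read the derivative $f'$ directly off the reparametrisation and then invoke two constraints already at our disposal: the admissibility inequality $|f'|\le 1$ from Remark \ref{remark-derivadas}, and the curvature formula \eqref{curvature3} combined with Proposition \ref{vetor-horizontal}; no new differential equation needs to be solved. First I would isolate the algebraic relation linking $t$, $f$ and $bs+c$. The two displayed formulas give
\[
bs(t)+c=bd\exp\!\left(-\int_{t_0}^t \frac{\tau}{\tau^2-\tau-\frac{b}{a^2}}\dif\tau\right)\quad\text{and}\quad f(s(t))=adt\exp\!\left(-\int_{t_0}^t \frac{\tau}{\tau^2-\tau-\frac{b}{a^2}}\dif\tau\right).
\]
Since $f>0$ forces $d\neq 0$ and $t\neq 0$ on $J$, and the exponential is strictly positive, both expressions are nonzero and their quotient cancels the exponential, yielding $bs(t)+c=\frac{b\,f(s(t))}{at}$.

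Next I would obtain $f'$ with essentially no computation. Substituting the last identity into the Ricci equation \eqref{ode-ricci}, which reads $ff'=af+bs+c$, gives $ff'=af+\frac{bf}{at}=f\!\left(a+\frac{b}{at}\right)$; dividing by $f>0$ produces $f'(s(t))=a+\frac{b}{at}$. Remark \ref{remark-derivadas} guarantees that a genuine profile curve forces $|f'|\le 1$ on $I$, and transporting this pointwise through $s(t)$ gives precisely $-1\le \frac{b}{at}+a\le 1$, that is, the first of the two sets in the asserted inclusion.

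For the second set I would read off the vanishing locus of $K$ from \eqref{curvature3}. On $J$ every factor of that expression other than $\left(t^2-t-\frac{b}{a^2}\right)$ is nonzero, since $a,b,d,t$ are all nonzero and the exponential is strictly positive; hence $K(t)=0$ if and only if $t^2-t-\frac{b}{a^2}=0$. Because $K$ is assumed not to vanish identically, the first assertion of Proposition \ref{vetor-horizontal} rules out any zero of $K$ in the interior, so $t^2-t-\frac{b}{a^2}\neq 0$ for every $t\in J$. Intersecting the two constraints gives the claimed inclusion.

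The argument is computationally light, and the only genuinely load-bearing observation is the clean identity $bs+c=bf/(at)$; once it is in hand, both the expression for $f'$ and the zero set of $K$ fall out immediately. The single point that needs care is the bookkeeping that converts conditions on $s\in I$ into conditions on $t\in J$ through $s(t)$ — in particular verifying $t\neq 0$ and $d\neq 0$ so that no factor is inadvertently lost when dividing or when locating the zeros of $K$.
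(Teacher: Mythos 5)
Your proof is correct and follows essentially the same route as the paper: the first inclusion comes from the admissibility constraint $|f'|\leq 1$ of Remark \ref{remark-derivadas}, and the second from equation \eqref{curvature3} together with Proposition \ref{vetor-horizontal} and the hypothesis that $K$ does not vanish identically. The only difference is organizational: where the paper factors $f^2-(af+bs+c)^2$ into two linear factors and excludes one sign configuration using $f>0$, you cancel the exponential to obtain $bs+c=\frac{bf}{at}$, read off $f'(s(t))=a+\frac{b}{at}$ from \eqref{ode-ricci}, and apply $|f'|\leq 1$ directly --- an equivalent, slightly tidier version of the same computation.
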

	
	\begin{proof}
		Writing $f(t) = f(s(t)),$ Remark \ref{remark-derivadas} implies
		\begin{align*}
			0 &\leq f(t)^2 - \left(af(t) + bs(t) + c\right)^2\\
			&= \left((1 - a)f(t) - bs(t) - c\right)\left((1 + a)f(t) + bs(t) + c\right).
		\end{align*}
		Consequently, in the maximal set of definition of $\alpha,$ we must have that either
		\begin{equation}\label{wrongsystem}
			\begin{cases}
				(1 - a)f(t) - bs(t) - c \leq 0\\
				(1 + a)f(t) + bs(t) + c \leq 0
			\end{cases}
		\end{equation}
		or
		\begin{equation}\label{possiblesystem}
			\begin{cases}
				(1 - a)f(t) - bs(t) - c \geq 0\\
				(1 + a)f(t) + bs(t) + c \geq 0.
			\end{cases}
		\end{equation}
		The system \eqref{wrongsystem} implies $f(t) \leq 0,$ which cannot occur by the assumption that $f$ is positive. Therefore, the system \eqref{possiblesystem} is the only possibility. Replacing the expressions of $f$ and $s$ in its first inequality, we have $bd \leq a(1-a)dt.$ An analogous computation using the second inequality of \eqref{possiblesystem} gives that $-a(1+a)dt \leq bd.$ Therefore, since $0 < f(t) = adt,$ the interval $J$ satisfies
		
		\begin{equation*}\label{inequalities-2}
			J \subseteq \left\{t\in\R : -1 \leq \frac{b}{at} + a \leq 1\right\}.
		\end{equation*}
		Furthermore, setting $R(t) = t^2 - t - \frac{b}{a^2},$ we have
		\[K(t) = -\frac{bR(t)}{a^2d^2t^4}\exp\left(2\int_{t_0}^t \frac{\tau}{\tau^2 - \tau - \frac{b}{a^2}}\dif \tau\right)\]
		by equation \eqref{curvature3}. The discriminant of the polynomial $R$ is $\Delta_R = \frac{a^2 + 4b}{a^2}.$ When $a^2 + 4b < 0,$ there is nothing to do. When $a^2 + 4b \geq 0,$ the roots of $R$ are given by
		\[t_1 = \frac{|a| - \sqrt{a^2 + 4b}}{2|a|}\ \ \text{and}\ \ t_2 = \frac{|a| + \sqrt{a^2 + 4b}}{2|a|}.\]
		By the assumption that $a \neq 0 \neq b,$ we conclude that $t_1 \neq 0 \neq t_2.$ If there is $t_* \in J$ such that $K(t_*) = 0,$ we must have $t_* \in \{t_1, t_2\}$, and consequently that $K$ vanishes identically by Proposition \ref{vetor-horizontal}. Therefore, for each value of $\Delta_R,$
		\[J \subseteq \{t\in\R : R(t) \neq 0\}.\]
		This concludes our assertion.
	\end{proof}
	
	\section{Rotational Ricci surfaces meeting $\partial B^3$ orthogonally}\label{freeboundary}
	
	Let $B^3$ be the unit Euclidean three-dimensional ball centered at the origin of $\R^3,$ and let $\Sigma$ be a compact surface with non-empty smooth boundary $\partial \Sigma$ such that $\intt(\Sigma) \subset \intt(B^3),$ $\partial\Sigma \subset \partial B^3$ and the unit conormal exterior to $\Sigma$ along $\partial \Sigma$ coincides with the position vector. If $\Sigma$ is minimal, it is the so-called \textit{free boundary minimal surface in $B^3.$} The condition along $\partial \Sigma$ will be referred as \textit{orthogonality condition.}
	
	When $\Sigma = \Sigma_f$ is a rotational surface in $B^3,$ one can express the orthogonality condition as follows. An arbitrary boundary curve $\Gamma$ of $\partial \Sigma_f$ can be parametrised by
	\begin{equation}\label{gamma}
		\gamma(\theta) = X\left(\bar{s}, \frac{\theta}{f(\bar{s})}\right),
	\end{equation}
	for some $\bar{s} \in \partial I$. This means that the unit vector $\gamma'(\theta)$ tangent to $\Gamma$ is parallel to $X_{\theta}(\bar{s}, \theta).$ Since $X$ is an orthogonal parametrisation of $\Sigma_f$, we conclude that $X_s(\bar{s}, \theta)$ is normal to $\Gamma.$ Moreover, once the profile curve of $\Sigma_f$ is parametrised by its arc length, then $X_s(\bar{s}, \theta)$ is the unit conormal exterior to $\Sigma_f$ along $\Gamma.$ Hence, looking to $X(\bar{s}, \theta)$ as the position vector, the orthogonality condition along $\Gamma$ is given by
	\[X_s(\bar{s}, \theta) = X(\bar{s}, \theta).\]
	
	It is well-known that there is only a piece of catenoid, up to isometries of $B^3,$ that is free boundary in $B^3.$ It is the well-known \textit{critical catenoid}. The goal of this section is to construct a family of Ricci surfaces with the orthogonality condition that interpolates the geodesic of $B^3$ contained in the $z$-axis and the critical catenoid. Before, we will see that the orthogonality condition imposes strong restrictions on the possibilities of these surfaces.
	
	\begin{lemma}\label{fblemma}
		Let $\Sigma_f$ be a rotational Ricci surface inside $B^3$ which meets $\partial B^3$ orthogonally, where $f:[\bar{s}_1, \bar{s}_2] \to \R$ is a positive smooth function. Then, the Gaussian curvature of $\Sigma_f$ is negative and $f$ is given by
		\[f(s) = \sqrt{bs^2 + 2cs + d}\]
		for some constants $b,c,d \in \R$ with $c^2<bd.$
	\end{lemma}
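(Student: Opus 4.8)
The plan is to turn the orthogonality condition into pointwise data for $f$ at the two endpoints, then force $K<0$ by an integral computation, and finally read off the form of $f$ from the classification in Lemma~\ref{lemma} and Theorem~\ref{a=0}. First I would make the boundary condition explicit. For the boundary circle \eqref{gamma} the orthogonality condition (written $X_s=X$ with $X_s$ the exterior conormal) amounts, in terms of the profile curve $\alpha(s)=(f(s),0,g(s))$, to $\alpha'(\bar s)=\pm\alpha(\bar s)$, where the sign is $+$ at the endpoint where $+\partial_s$ is the outward direction and $-$ at the other. Comparing first components gives $f'(\bar s_2)=f(\bar s_2)>0$ and $f'(\bar s_1)=-f(\bar s_1)<0$ (the relations for $g$ are then automatic from $f'^2+g'^2=1$ and place both endpoints on $\partial B^3$). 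In particular $f'$ changes sign, so $f$ is not monotone and attains an interior minimum.

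Next I would prove that $K<0$. Using $K=-f''/f$ from \eqref{eq-2} and $dA=f\,ds\,d\theta$,
\[
\int_{\Sigma_f}K\,dA=2\pi\int_{\bar s_1}^{\bar s_2}(-f'')\,ds=-2\pi\big(f'(\bar s_2)-f'(\bar s_1)\big)=-2\pi\big(f(\bar s_1)+f(\bar s_2)\big)<0.
\]
Since $\Sigma_f$ is a Ricci surface its Gaussian curvature does not change sign, and as this integral is negative we get $K\le 0$ with $K\not\equiv 0$. If $K$ vanished at some point of $[\bar s_1,\bar s_2]$—where $f>0$—it would vanish identically by Proposition~\ref{vetor-horizontal}(1), contradicting the strict inequality; hence $K<0$ everywhere.

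With $K<0$ in hand I would identify $f$ through the classification. By Lemma~\ref{lemma} one has $ff'=af+bs+c$. The two theorems for the case $b=0$ exhibit $f$ as a strictly monotone diffeomorphism, which is impossible since $f'$ takes both signs; this excludes $b=0$. Substituting $bs+c=ff'-af$ into \eqref{eq-K_f} yields the convenient identity $K=(f'^2-af'-b)/f^2$, and evaluating it at the interior minimum (where $f'=0$) together with $K<0$ forces $b>0$. Once the remaining case $a\neq 0$, $b\neq 0$ is excluded, we are left with $a=0$, so that $ff'=bs+c$ integrates to $f^2=bs^2+2cs+d$; then \eqref{eq-K_f} reduces to $K=(c^2-bd)/f^4$, and $K<0$ is precisely $c^2<bd$, as claimed.

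The main obstacle is excluding the case $a\neq 0$, $b\neq 0$. Here I would use the explicit profile of that case, for which the reparametrisation variable $t$ satisfies $f'=a+\tfrac{b}{at}$, so the two boundary conditions $f'=\pm f$ become the relations $b=-a\,p\,(f(\bar s_1)+a)=a\,q\,(f(\bar s_2)-a)$, where $p,q$ are the parameter values at $\bar s_1,\bar s_2$. I would then combine these with $b>0$, with $0<f(\bar s_i)\le 1$, with the requirement that the minimum $t=-b/a^2$ lie strictly between $p$ and $q$, and—crucially—with the vertical-consistency relation $g(\bar s_2)-g(\bar s_1)=\sqrt{1-f(\bar s_1)^2}+\sqrt{1-f(\bar s_2)^2}$ imposed by putting both endpoints on the unit sphere. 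The delicate point is precisely this last, global, constraint: the pointwise boundary relations alone remain compatible with $a\neq 0$ (and, as one checks, mere containment in $B^3$ is automatic once $K<0$), so the contradiction must come from the interplay between the explicit solution and the height constraint.
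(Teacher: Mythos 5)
Your first half is correct: the boundary conditions $f'(\bar s_2)=f(\bar s_2)$ and $f'(\bar s_1)=-f(\bar s_1)$, the computation $\int_{\Sigma_f}K\dif A=-2\pi\bigl(f(\bar s_1)+f(\bar s_2)\bigr)<0$ (which is the paper's Gauss--Bonnet step carried out by direct integration, since $\kappa_g\equiv 1$ along $\partial\Sigma_f$), and the upgrade from ``negative total curvature'' to $K<0$ everywhere via the no-sign-change property of Ricci surfaces and Proposition \ref{vetor-horizontal} are all sound. The identity $K=(f'^2-af'-b)/f^2$, the conclusion $b>0$ from evaluating it at the interior minimum, and the exclusion of $b=0$ by strict monotonicity are also correct.

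However, the proof is not complete: the case $a\neq 0$, $b\neq 0$ --- which is the actual content of the lemma, namely that $a=0$ --- is never excluded. You acknowledge that the pointwise boundary relations remain compatible with $a\neq 0$ and that the contradiction ``must come from the interplay between the explicit solution and the height constraint,'' but you never extract that contradiction; you only list the constraints you would try to combine. Since the profile in the case $a\neq0$, $b\neq0$ is known only through an integral reparametrisation, it is far from clear this route can be closed, and nothing in your text closes it. The paper avoids this case analysis entirely: once $K<0$, the Ricci condition becomes $4K=\Delta\log(-K)$, and integrating it over $\Sigma_f$ with the divergence theorem and the orthogonality condition gives
\begin{equation*}
	-4L(\partial\Sigma_f)=\frac{K'(\bar s_1)}{K(\bar s_1)}L(\Gamma_1)+\frac{K'(\bar s_2)}{K(\bar s_2)}L(\Gamma_2),
\end{equation*}
which forces $K'+4K\geq 0$ on one boundary circle and $K'+4K\leq 0$ on the other. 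Combined with the third-order equation $ff'''+3f'f''=af''$ from the proof of Lemma \ref{lemma}, the boundary condition $f'=f$, and $f''=-Kf>0$, these two inequalities yield $a\leq 0$ and $a\geq 0$, hence $a=0$, after which Theorem \ref{a=0} gives the stated form of $f$. This second, global integral identity --- integrating the Ricci equation itself, not just the curvature --- is the missing idea in your proposal; without it (or a completed version of your height-constraint argument), the lemma is unproved.
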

	
	\begin{proof}
		Let $\Sigma_f$ be a rotational Ricci surface in $B^3$ meeting $\partial B^3$ orthogonally, where $f: [\bs_1, \bs_2] \to \R$ is a positive smooth function. Denote by $\partial \Sigma_f = \Gamma_1 \cup \Gamma_2$ the boundary of $\Sigma_f,$ where $\Gamma_1$ and $\Gamma_2$ are two disjoint circles. Since the profile curve of $\Sigma_f$ is embedded, this surface is topologically an annulus, and hence Gauss-Bonnet theorem gives that
		\[\int_{\Sigma_f} K\dif A + \int_{\partial\Sigma_f} \kappa_g \dif \mu = 0,\]
		where $\kappa_g$ is the geodesic curvature of $\partial\Sigma_f$ as curves in $\Sigma_f$ calculated in the inner direction. By the orthogonality condition, if we parametrise an arbitrary boundary curve of $\Sigma_f$ as in \eqref{gamma}, we get
		\[\kappa_g(\bar{s}) = \langle \nabla_{\gamma'} X_s, \gamma'\rangle = \frac{1}{f(\bar{s})^2} \langle \nabla_{X_\theta} X, X_\theta\rangle = \frac{1}{f(\bar{s})^2} \langle X_\theta, X_\theta \rangle = 1,\]
		where $X_s = X_s(\bar{s}, \theta),$ $X = X(\bar{s}, \theta)$ and $\bar{s} \in \{\bar{s}_1, \bar{s}_2\}.$ Thus,
		\[\int_{\Sigma_f} K\dif A = -L(\partial \Sigma_f),\]
		where $L(\cdot)$ means the length of a curve. This shows that $K < 0.$ In this case, the Ricci condition is rewritten as $4K = \Delta\log(-K).$ Combining divergence theorem and orthogonality condition, we can use the first identity of \eqref{gradient-and-laplacian} and obtain that
		\begin{equation}\label{divergence}
			\begin{aligned}
				-4L(\partial \Sigma_f) &= \int_{\partial \Sigma_f}\langle \nabla\log(-K), X_s\rangle\dif\mu\\
				&= \int_{\partial \Sigma_f} \frac{K'}{K}\dif\mu\\
				&= \frac{K'(\bar{s}_1)}{K(\bar{s}_1)}L(\Gamma_1) + \frac{K'(\bar{s}_2)}{K(\bar{s}_2)}L(\Gamma_2).
			\end{aligned}
		\end{equation}
		Note that the last equality above is valid because $K' = K'(\bar{s})$ and $K = K(\bar{s})$ are constants along any boundary component of $\Sigma_f.$ Assume without loss of generality that
		\[\frac{K'(\bar{s}_1)}{K(\bar{s}_1)} \leq \frac{K'(\bar{s}_2)}{K(\bar{s}_2)}.\]
		At the points of $\Gamma_1,$ equation \eqref{divergence} implies $K'(\bar{s}_1) + 4K(\bar{s}_1) \geq 0.$ By \eqref{eq-2} and orthogonality condition $f'(\bar{s}_1) = f(\bar{s}_1),$ this inequality is equivalent to $f(\bar{s}_1)(f'''(\bar{s}_1) + 3f''(\bar{s}_1)) \leq 0.$ Thus, we can use equation \eqref{prooflemma} and see that
		\[af''(\bar{s}_1) = f(\bar{s}_1)\left(f'''(\bar{s}_1) + 3f''(\bar{s}_1)\right) \leq 0,\]
		where $a$ is the constant of \eqref{ode-ricci}. Since $f'' > 0$ on $I,$ then $a \leq 0.$ Analogously, we can use \eqref{divergence} to see that $K'(\bar{s}_2) + 4K(\bar{s}_2) \leq 0$ and conclude that $a \geq 0,$ which shows that $a = 0.$ It follows from Theorem \ref{a=0} that there are constants $b, c, d \in \R$ with $c^2 < bd$ such that $f(s) = \sqrt{b s^2 + 2c s + d}.$
	\end{proof}
	
	In order to guarantees that the intersection between the rotational Ricci surface and the boundary of $B^3$ is non-empty, we need to restrict ourselves to pieces of catenoidal-Ricci surfaces.
	
	\begin{theorem}\label{fbtheorem}
		There exists an one-parameter family $\{\Sigma^b\}_{0 \leq b \leq 1}$ inside $B^3$ with the following properties:
		\begin{itemize}
			\item[1.] $\Sigma^0$ is the vertical geodesic of $B^3$;
			\item[2.] $\Sigma^b$ is a piece of a catenoidal-Ricci surface meeting $\partial B^3$ orthogonally for $0 < b \leq 1$;
			\item[3.] $\Sigma^1$ is the critical catenoid.
		\end{itemize}
	\end{theorem}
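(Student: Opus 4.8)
The plan is to reduce the (vector) orthogonality condition to a single scalar equation and solve it by the intermediate value theorem, reading off the two endpoints explicitly. By Lemma \ref{fblemma}, any surface as in the statement has $a=0$ and negative curvature, with $f(s)=\sqrt{bs^2+2cs+d}$ and $c^2<bd$; Theorem \ref{a=0} then forces $b,d>0$. Using the reflection symmetry from Theorem \ref{a=0prop} together with the symmetry of $B^3$, I would place the surface so that its neck lies in the $xy$-plane, i.e. take $c=0$, so that the free boundary sits at $s=\pm\bar s$. Finally, since the Ricci condition \eqref{ricci-condition} is invariant under homotheties of $\R^3$ and a homothety only rescales $d$ while keeping $b$ fixed, I would normalise $d=1$ and work with the model profile $f_b(s)=\sqrt{bs^2+1}$ and $g_b(s)=\int_0^s\sqrt{(b(1-b)\zeta^2+1)/(b\zeta^2+1)}\,\dif\zeta$, where $b\in(0,1]$ is the genuine shape parameter.

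Next I would recast the orthogonality condition. With $X(s,\theta)$ as in Section \ref{RicciRotational}, the condition $X_s=X$ along $\partial\Sigma_f\subset\partial B^3$ holds precisely when the unit tangent $(f_b',g_b')$ is parallel to the position $(f_b,g_b)$ and the distance to the origin equals $1$; the parallelism is
\[ h_b(s):=f_b'(s)\,g_b(s)-f_b(s)\,g_b'(s)=0, \]
and the radius is fixed by the homothety. Concretely, if $h_b(\bar s)=0$ for some $\bar s>0$, then $(f_b,g_b)$ meets the sphere of radius $\rho_b:=\sqrt{f_b(\bar s)^2+g_b(\bar s)^2}$ orthogonally at $s=\bar s$; scaling by $1/\rho_b$ places this point on $\partial B^3$ and yields $X_s=X$ there, hence also at $-\bar s$ by symmetry and the relation $f_b'^2+g_b'^2=1$. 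Since $\tfrac{\dif}{\dif s}(f_b^2+g_b^2)=2(f_bf_b'+g_bg_b')>0$ for $s>0$, the radius $|X|$ is strictly increasing, so the piece $\{|s|\le\bar s\}$ lies in $B^3$ with boundary on $\partial B^3$, the intersection is transverse, and the annulus is embedded. Thus the whole problem collapses to producing a positive root of the single function $h_b$.

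The core of the argument is the existence and continuity of this root, and this is the step I expect to be the main obstacle. I would first evaluate $h_b(0)=-g_b'(0)f_b(0)=-1<0$. For the behaviour at infinity, note that for $0<b<1$ one has $g_b(s)-\sqrt{1-b}\,s\to C_b$, where $C_b=\int_0^\infty\big(\sqrt{(b(1-b)\zeta^2+1)/(b\zeta^2+1)}-\sqrt{1-b}\big)\,\dif\zeta$; the integrand is positive (a one-line computation reduces this to $b>0$) and decays like $\zeta^{-2}$, so $C_b>0$ and $h_b(s)\to\sqrt{b}\,C_b>0$ (while for $b=1$ one has $h_1(s)\to+\infty$). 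The intermediate value theorem then yields $\bar s_b>0$ with $h_b(\bar s_b)=0$. To obtain a genuine \emph{family}, I would take $\bar s_b$ to be the first positive zero and verify that the crossing is transverse, i.e. $h_b'(\bar s_b)=f_b''(\bar s_b)g_b(\bar s_b)-f_b(\bar s_b)g_b''(\bar s_b)>0$, so that the implicit function theorem produces a smooth branch $b\mapsto\bar s_b$ and hence a continuous family $\{\Sigma^b\}_{0<b\le1}$ of embedded free boundary catenoidal-Ricci surfaces; establishing this transversality (or monotonicity of $h_b$) is the delicate point.

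It then remains to identify the two ends. For $b=1$ the profile is the catenoid $f_1(s)=\sqrt{s^2+1}$, $g_1(s)=\arcsinh s$; writing $z=\arcsinh s$, the equation $h_1(\bar s)=0$ becomes $z\tanh z=1$, which is exactly the classical defining relation of the critical catenoid, so $\Sigma^1$ is that surface. For the other end I would let $b\to0^+$ and analyse the root asymptotically: one finds $\bar s_b\to+\infty$ with $g_b(\bar s_b)\gg f_b(\bar s_b)$, so the neck radius $1/\rho_b\to0$ while the two boundary circles, of radius $f_b(\bar s_b)/\rho_b\to0$ and height $g_b(\bar s_b)/\rho_b\to1$, contract to the poles $(0,0,\pm1)$. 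Hence $\Sigma^b$ degenerates to the diameter of $B^3$ on the $z$-axis, and defining $\Sigma^0$ to be this vertical geodesic extends the family continuously to $b=0$, exhibiting the desired interpolation between the vertical geodesic and the critical catenoid.
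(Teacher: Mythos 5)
Your proposal is correct, and it takes a genuinely different route from the paper. The paper never leaves the unit ball: after translating to $c=0$, it splits the orthogonality condition $X_s=X$ into its radial and vertical components, so that $\hat f'(\rho)=\hat f(\rho)$ becomes the quadratic \eqref{r} tying the neck radius to the boundary parameter $\rho$, and the remaining constraint is membership in the sphere, equation \eqref{boundary}; uniqueness of the admissible root is then extracted from properness and the monotonicity of $t\mapsto\|X(t,\theta)\|$ (via Theorem \ref{a=0prop}), while existence is handled rather implicitly. You instead exploit the homothety invariance of the Ricci condition --- a symmetry the paper never invokes --- to normalise the neck radius to $1$, which collapses the two scalar conditions into the single Wronskian equation $h_b=f_b'g_b-f_bg_b'=0$; existence then follows from an explicit intermediate value argument ($h_b(0)=-1<0$, $h_b(+\infty)=\sqrt{b}\,C_b>0$), and the rescaling by $1/\rho_b$ absorbs the sphere condition automatically. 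Your route buys a transparent existence proof and a clean description of the $b\to 0$ degeneration; the paper's route buys uniqueness of $\Sigma^b$ inside $B^3$ for each fixed $b$, which is what makes the family canonical.

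The one step you flag as delicate --- transversality, or monotonicity of $h_b$ --- is in fact immediate, so your proof closes completely. Since the cross terms cancel,
\begin{equation*}
h_b'(s)=f_b''(s)\,g_b(s)-f_b(s)\,g_b''(s),
\end{equation*}
and for $s>0$ one has $f_b''(s)=b\,(bs^2+1)^{-3/2}>0$ and $g_b(s)>0$, while $g_b''(s)<0$ because
\begin{equation*}
\big(g_b'(s)\big)^2=(1-b)+\frac{b}{bs^2+1}
\end{equation*}
is strictly decreasing and $g_b'>0$. Hence $h_b'>0$ on $(0,+\infty)$: the positive zero $\bar s_b$ is unique and transverse, the implicit function theorem gives smooth dependence of $\bar s_b$ (hence of $\Sigma^b$) on $b$, and --- combined with the scale invariance you already established --- this recovers exactly the uniqueness statement that the paper proves by its monotonicity-of-radius argument.
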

	
	\begin{proof}
		Let $\Sigma_f$ be a rotational Ricci surface in $B^3$ meeting $\partial B^3$ orthogonally. It follows by Lemma \ref{fblemma} that for $b \in (0,1],$ under the change of parameter $s(t) = t -\frac{c}{b},$ the function $f$ becomes the function $\hat{f} = f\circ s: [-\rho, \rho] \to \R$ given by
		\[\hat{f}(t) = \sqrt{b t^2 + \frac{bd - c^2}{b}},\]
		where $\rho>0$ is such that $f(\bs_1) = \hat{f}(-\rho)$ and $f(\bs_2) = \hat{f}(\rho).$ It is direct to check that in the parameter $t,$ equation \eqref{ode-ricci} is rewritten as $\hat{f}(t) \hat{f}'(t) = b t.$ Moreover, since $t$ is the arc length of the profile curve of $\Sigma_f,$ the orthogonality condition implies $\hat{f}'(\rho) = \hat{f}(\rho).$ It follows that
		\begin{equation}\label{r}
			b^2\rho^2 - b^2\rho + bd - c^2 = 0.
		\end{equation}
		This equation gives $0 < b d - c^2 = b^2\rho(1 - \rho)$ and consequently $\rho < 1.$ Thus,
		\[\hat{f}(t) = \sqrt{b}\sqrt{t^2 + \rho(1 - \rho)}.\]
		In this case, we can parametrise $\Sigma_f$ as
		\[X(t, \theta) = \left(\sqrt{b}\sqrt{t^2 + \rho(1 - \rho)} \cos \theta, \sqrt{b}\sqrt{t^2 + \rho(1 - \rho)} \sin \theta, \hat{g}(t)\right),\]
		where $\hat{g} = g\circ s$ is given by
		\[\hat{g}(t) = \int_{0}^t \sqrt{\frac{(1 - b)\tau^2 + \rho(1 - \rho)}{\tau^2 + \rho(1 - \rho)}}\dif \tau.\]
		Note that \eqref{r} has two positive solutions, say $\rho_1 \leq \rho_2.$ Since $\hat{f}$ and $\hat{g}$ are increasing functions for $t > 0$ and $\Sigma_f$ lies in a proper surface by Theorem \ref{a=0prop}, there is only one $\rho \in \{\rho_1, \rho_2\}$ such that
		\begin{equation}\label{boundary}
			1 = \|X(\rho, \theta)\|^2 = b \rho + \hat{g}(\rho)^2
		\end{equation}
		holds. Finally, the orthogonality condition implies $\hat{g}'(\rho) = \hat{g}(\rho),$ and one can check directly that this condition implies equation \eqref{boundary}. This shows existence and uniqueness of $\Sigma_f$ for each $b \in (0,1].$ Moreover, we have that $\hat{f}(t) \to 0$ when $b \to 0.$ Hence, the profile curve of $\Sigma_f$ converges to the straight line contained in the $z$-axis and $\rho \to 1$ as $b \to 0.$
	\end{proof}
	
	\begin{remark}
		It can be checked by means of \eqref{g-function} and \eqref{boundary} that he value $\rho$ for the critical catenoid is the only solution of
		\[\frac{1}{\sqrt{\rho}}\sinh\left(\frac{1}{\sqrt{\rho}}\right) = \frac{1}{\sqrt{1-\rho}},\]
		whose the neck radius is $\sqrt{\rho(1-\rho)}$. Numerically, we have $\rho \approx 0.694817.$
	\end{remark}
	\begin{remark}
		It is well-known that catenoids in $\R^3$ cannot degenerate into a curve. However, Theorem \ref{fbtheorem} produces a family of catenoidal-Ricci surfaces that converges to the $z$-axis of $\R^3.$
	\end{remark}
	
	\bibliographystyle{siam}
	\bibliography{references}
\end{document}